\documentclass{amsart}

\usepackage{amsmath}
\usepackage{amssymb}
\usepackage{mathrsfs}
\usepackage{amsthm}
\usepackage{enumerate}

\newtheorem{thm}{Theorem}[section]

\newtheorem{lem}[thm]{Lemma}

\theoremstyle{definition}

\newtheorem{conjecture}[thm]{Conjecture}

\numberwithin{equation}{section}

\title[Weak type estimates]{Weak type commutator and Lipschitz estimates: resolution of the Nazarov-Peller conjecture}

\author{M. Caspers, D. Potapov, F. Sukochev, D. Zanin}
\date{\today, {\it MSC2000}: 47B10, 47L20, 47A30,\\ {\it Acknowledgement:} The first author is supported by the grant SFB 878. The work of other co-authors is supported by the ARC}

\address{M. Caspers, Fachbereich Mathematik und Informatik der Universit\"at M\"unster,
Einsteinstrasse 62,
48149 M\"unster, Germany}
\email{martijn.caspers@uni-muenster.de}
\address{D. Potapov, F. Sukochev, D. Zanin, School of Mathematics and Statistics, UNSW, Kensington 2052, NSW, Australia}
\email{d.potapov@unsw.edu.au}
 \email{f.sukochev@unsw.edu.au}
\email{d.zanin@unsw.edu.au}
% \address{M. Caspers, Fachbereich Mathematik und Informatik der Universit\"at M\"unster,
%Einsteinstrasse 62,
%48149 M\"unster, Germany}
 %\email{martijn.caspers@uni-muenster.de}
%\thanks{MC is supported by the grant SFB 878 ``{\it Groups, geometry and actions}''.}

%\keywords{??}
%\subjclass[2010]{??}

\begin{document}

\begin{abstract}
Let $\mathcal{M}$ be a semi-finite von Neumann algebra and let $f: \mathbb{R} \rightarrow \mathbb{C}$ be a Lipschitz function. If $A,B\in\mathcal{M}$ are self-adjoint operators such that $[A,B]\in L_1(\mathcal{M}),$ then
$$\|[f(A),B]\|_{1,\infty}\leq c_{abs}\|f'\|_{\infty}\|[A,B]\|_1,$$
where $c_{abs}$ is an absolute constant independent of $f$, $\mathcal{M}$ and $A,B$ and $\|\cdot\|_{1,\infty}$ denotes the weak $L_1$-norm.
If $X,Y\in\mathcal{M}$ are self-adjoint operators such that $X-Y\in L_1(\mathcal{M}),$ then
$$\|f(X)-f(Y)\|_{1,\infty}\leq c_{abs}\|f'\|_{\infty}\|X-Y\|_1.$$
This result resolves a conjecture raised by F. Nazarov and V. Peller implying a couple of existing results in perturbation theory.
\end{abstract}

\maketitle

\section{Introduction}
Let $L_p(H)$ be the Schatten-von Neumann ideal of $B(H)$. It consists of all compact operators for which its sequence of singular values lies in $\ell_p.$ Let $F_p$ be the class of functions $f: \mathbb{R} \rightarrow \mathbb{C}$ such that
$$f(B) - f(C) \in L_p(H),$$
for all self-adjoint $B,C$ such that $B-C\in L_p(H)$ and set
$$\Vert f \Vert_{F_p} = \sup_{B\neq C} \frac{\Vert f(B) - f(C)\Vert_p}{\Vert B - C \Vert_p}.$$
It was conjectured by M.G. Krein \cite{Krein} that whenever the  derivative $f' \in L_\infty(\mathbb{R})$ we have $f \in F_1$. This conjecture does not hold as was shown by Y.B. Farforovskaya in \cite{Far8}. Also it was shown that the analogue of Krein's problem fails in the case $p = \infty$ (see \cite{Far6, Far7}). In fact already for the absolute value function it was found by T. Kato that Krein's problem has a negative answer \cite{Kato};  and similarly in the case $p=1$ by E.B. Davies \cite{Davies}.

A positive result in this direction was first obtained by M. Birman and M. Solomyak \cite[Theorem 10]{BiSo1} who proved that $C^{1+\epsilon} \subseteq F_1$ for every $\epsilon>0$, and later improved by V. Peller \cite{PellerHankel} who showed that $B_{\infty 1}^1 \subseteq F_1$. Here $B_{pq}^s$ is the class of Besov spaces for which we refer to \cite{Grafakos}.  The Krein problem for the case $1< p < \infty, p \not = 2$ remained open until \cite{PotapovSukochev2}. In \cite{PotapovSukochev2} it was shown by the second and third named author that $F_p$ consists exactly of all Lipschitz functions. Moreover in \cite{CMPS} a quantitative estimate for $\Vert f \Vert_{F_p}$ was found, namely $\Vert f \Vert_{F_p} \simeq p^2/(p-1)$. Earlier the same problem had been considered by M. de la Salle (unpublished, see \cite{Salle}) who was able to show already that  $\Vert f \Vert_{F_p} \leq c_{abs}\cdot p^4/(p-1)^2$ with an absolute constant $c_{abs},$ which is more optimal than \cite{PotapovSukochev2}. 

Other results concerning this problem  have been obtained in \cite{DDPS2} and \cite{Kosaki} and in the context of this paper we also mention \cite{Ran} in which weak estimates for martingale inequalities were obtained.

\vspace{0.3cm}

Using interpolation, the above results would follow from a weak type Lipschitz estimate between $L_1$ and the weak-$L_1$ space $L_{1,\infty}$. The estimate was conjectured in a paper of F. Nazarov and V. Peller \cite{NazarovPeller}, and has remained the major open question in the study of Lipschitz properties of operator valued functions.
Denote $L_{1, \infty}(H)$ for the weak $L_1$-space consisting of all compact operators $A$ whose sequence $\{\mu(k,A)\}_{k\geq0}$ of singular values satisfies $\mu(k,A)=O(\frac1{k+1}).$

\begin{conjecture} \label{Conjecture}
 Let $f: \mathbb{R} \rightarrow \mathbb{C}$ be Lipschitz. Whenever $A, B \in B(H)$ are self-adjoint operators such that $A - B \in L_1(H)$, we have that $f(A) - f(B)  \in L_{1, \infty}(H)$ and
$$\Vert f(A) - f(B) \Vert_{1, \infty} \leq c_{abs}\|f'\|_\infty \Vert A - B\Vert_1,$$
 for some absolute constant $c_{abs}$.
\end{conjecture}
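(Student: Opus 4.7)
The plan is to first reduce the Lipschitz-type estimate to the commutator estimate stated earlier, and then attack the commutator estimate via double operator integrals (DOI) combined with a non-commutative Calder\'on--Zygmund decomposition.

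\emph{Reduction.} Working in $M_2(\mathcal{M})$, set $\tilde A := \operatorname{diag}(X,Y)$ and $\tilde B := \bigl(\begin{smallmatrix} 0 & 1 \\ 1 & 0 \end{smallmatrix}\bigr)$, both self-adjoint. A direct computation shows that $[\tilde A,\tilde B]$ has off-diagonal blocks $\pm(X-Y)\in L_1(M_2(\mathcal{M}))$, while $[f(\tilde A),\tilde B]$ has off-diagonal blocks $\pm(f(X)-f(Y))$. Hence $\|[f(\tilde A),\tilde B]\|_{1,\infty} \sim \|f(X)-f(Y)\|_{1,\infty}$ and $\|[\tilde A,\tilde B]\|_1 \sim \|X-Y\|_1$, so the Lipschitz estimate follows from the commutator estimate applied in $M_2(\mathcal{M})$, up to an absolute factor of $2$.

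\emph{Commutator estimate via DOI.} The essential identity is $[f(A),B] = T^{E_A,E_A}_{f^{[1]}}([A,B])$, where $T^{E_A,E_A}_{f^{[1]}}$ is the double operator integral with respect to the spectral measure $E_A$ of $A$ and symbol the divided difference $f^{[1]}(x,y) := (f(x)-f(y))/(x-y)$. Thus it suffices to show that for every self-adjoint $A\in\mathcal{M}$ the map $T := T^{E_A,E_A}_{f^{[1]}}$ sends $L_1(\mathcal{M})$ into $L_{1,\infty}(\mathcal{M})$ with norm at most $c_{abs}\|f'\|_\infty$. I would decompose $f$ via a Littlewood--Paley partition $f = \sum_{j\in\mathbb{Z}} f_j$ with $\widehat{f_j}$ supported at frequency $\sim 2^j$; by Bernstein $\|f_j\|_\infty \lesssim 2^{-j}\|f'\|_\infty$, and smoothness of $\widehat{f_j}$ yields off-diagonal decay of $f_j^{[1]}$ at spatial scale $2^{-j}$. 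For each height $\lambda > 0$ apply a non-commutative Calder\'on--Zygmund decomposition to $S := [A,B]$: write $S = g + b$, where $\|g\|_2^2 \lesssim \lambda\|S\|_1$ and $b$ is sandwiched between projections $p,q$ with $\tau(1-p), \tau(1-q) \lesssim \|S\|_1/\lambda$. The good part is controlled by the elementary bound $\|T\|_{L_2\to L_2}\leq \|f^{[1]}\|_\infty\leq \|f'\|_\infty$ and Chebyshev.

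\emph{The main obstacle: the bad part.} The heart of the proof is to establish $\tau\bigl(e_{(\lambda,\infty)}(|Tb|)\bigr) \lesssim \|f'\|_\infty\|S\|_1/\lambda$. The strategy is to split $T = \sum_j T_j$ according to the Littlewood--Paley pieces and exploit the off-diagonal decay of each $f_j^{[1]}$ at scale $2^{-j}$ to bound the ``off-support'' contribution $pTb q^\perp$ (and symmetric terms) by $\|f_j'\|_\infty$ times a factor that sums in a suitable weak sense across scales. The difficulty peculiar to the non-commutative setting is that DOIs mix left and right multiplications, so unlike the classical scalar Calder\'on--Zygmund argument one cannot reduce to a one-sided off-support estimate; both projections $p,q$ of the CZ decomposition must be handled jointly, and their interaction with the non-commutative kernel of $T_j$ is what prevents direct transfer of the scalar proof. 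Overcoming this coupling between left and right small-support conditions is where I expect the principal technical work of the proof to reside.
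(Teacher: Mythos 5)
Your two reductions are correct and agree with the paper: the passage from the Lipschitz inequality to the commutator inequality via $\tilde A=\mathrm{diag}(X,Y)$, $\tilde B=\left(\begin{smallmatrix}0&1\\1&0\end{smallmatrix}\right)$ is exactly the paper's Theorem \ref{final theorem}, and the identity $[f(A),B]=T^{A,A}_{f^{[1]}}([A,B])$ is the paper's Lemma \ref{eg psw lemma}. The $L_2\to L_2$ bound $\|T^{A,A}_{f^{[1]}}\|\leq\|f'\|_\infty$ for the ``good part'' is also correct. Up to this point you are on firm ground.

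The remainder, however, has a genuine gap exactly where you flag it. You propose a direct non-commutative Calder\'on--Zygmund decomposition $S=g+b$ applied to the argument $V=[A,B]$, combined with a Littlewood--Paley decomposition of $f$, and then say that ``overcoming this coupling between left and right small-support conditions is where I expect the principal technical work of the proof to reside.'' That coupling is not a technicality to be filled in later: it is precisely the obstruction that (a) blocks the standard off-support argument, since a double operator integral is not a one-sided convolution against a kernel with respect to $E_A$, and (b) was the reason the earlier CPSZ method (triangular truncations plus positive-definite Schur multipliers) worked only for $f=|\cdot|$ and did not extend. Your outline does not contain an idea that resolves it, and I see no way to make the ``bad part'' estimate close as stated. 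The paper does not attempt this at all. Instead it uses a de Leeuw-type transference (Section \ref{Sect=DeLeeuw} and Theorem \ref{main lemma}): first reduce to $A$ with integer spectrum, then conjugate $V\otimes G_l^{\otimes 2}$ by the unitary $u=\sum_j p_j\otimes e_{(j,f(j))}$ in $\mathcal{M}\otimes L_\infty(\mathbb{R}^2)$ so that $T^{A,A}_{f^{[1]}}$ is realized, up to an $L_1$-small error as $l\to\infty$, as the genuine Fourier multiplier $1\otimes g(\nabla)$, where $g(e^{i\theta})=\tan\theta$ is a smooth homogeneous symbol on $\mathbb{R}^2$. The crucial observation is that $f^{[1]}(i,j)=g(i-j,\,f(i)-f(j))$, i.e.\ the divided difference is a \emph{homogeneous} two-variable symbol sampled along the graph of $f$; Lemma \ref{g is ok} (essentially Stein's method of rotations) then shows $\mathcal{F}g$ satisfies the CZ kernel estimates \eqref{parcet conditions}, and Parcet's Theorem \ref{Thm=Parcet} delivers the weak $(1,1)$ bound with no operator-side CZ decomposition needed. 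That transference step is the key idea entirely absent from your proposal and is what makes the paper's proof go through.
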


Nazarov and Peller \cite{NazarovPeller} gave an affirmative answer under the assumption that the rank of $A-B$ equals 1. Since $\Vert \: \cdot \: \Vert_{1, \infty}$ is a quasi-norm and not a norm for $L_{1, \infty}(H)$ it is impossible to extend their result for when $A-B$ is a general trace class operator.

Another positive result to the conjecture was found by the current authors in \cite{CPSZ} in the special case when $f$ is the absolute value map. The proof relies on the observation that the Schur multiplier of divided differences
\[
\left( \frac{f(\lambda) - f(\mu)}{ \lambda - \mu} \right)_{\lambda \not = \mu}
\]
can be written as a finite sum of compositions of a positive definite Schur multiplier and a triangular truncation operator. For general Lipschitz functions there is no reason that the latter fact should be true which renders the technique of \cite{CPSZ} inapplicable. 
%The result of \cite{CPSZ} can be extend to   functions of the so-called Davies class under suitable conditions as is explained in Section 7 of \cite{CPSZ}. This gives further evidence that the above conjecture holds, but stays away from a complete answer.

\vspace{0.3cm}

The main result of this paper is a proof of Conjecture \ref{Conjecture}. The importance of this result lies in the fact that this gives the sharpest possible estimate for perturbations and commutators. 
%In particular it implies any of the results announced above via a standard non-commutative interpolation argument. 
In particular it retrieves $\Vert f \Vert_{F_p} \simeq p^2/(p-1)$ \cite{CMPS} and the Nazarov--Peller result \cite{NazarovPeller}. A key ingredient in our proof is the connection with non-commutative Calder\'on-Zygmund theory and in particular J. Parcet's extension of the classical Calder\'on--Zygmund theorem (see Theorem \ref{Thm=Parcet} and \cite{Parcet}).

In the text we prove a somewhat stronger result in the terms of double operator integrals (see next section for definition), of which Conjecture \ref{Conjecture} is a corollary.

\begin{thm}\label{doi bound} If $A$ is a self-adjoint operator affiliated with a semifinite von Neumann algebra $\mathcal{M},$ and if $f:\mathbb{R}\to \mathbb{C}$ is Lipschitz then
$$\|T_{f^{[1]}}^{A,A}(V)\|_{1,\infty}\leq c_{abs}\|f'\|_{\infty}\|V\|_1,\quad V\in(L_1\cap L_2)(\mathcal{M}).$$
\end{thm}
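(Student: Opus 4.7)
The bound $\|T_{f^{[1]}}^{A,A}\|_{L_2\to L_2}\le\|f'\|_\infty$ is immediate from the pointwise estimate $|f^{[1]}(\lambda,\mu)|\le\|f'\|_\infty$ on the symbol. Extrapolating this $L_2$ bound to the weak endpoint $L_1\to L_{1,\infty}$ is exactly the task of Calder\'on--Zygmund theory, and the introduction's pointer to Parcet's non-commutative extension (Theorem \ref{Thm=Parcet}) makes it natural to aim at recognising $T_{f^{[1]}}^{A,A}$ as a singular integral that falls within Parcet's framework.

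\textbf{Reductions.} First I would reduce to $\mathcal{M}=B(H)$ with $A$ bounded and of finite pure point spectrum and $V$ finite rank, using density of such pairs and lower semi-continuity of the quasi-norm $\|\cdot\|_{1,\infty}$ under strong limits; the proof then amounts to a bound for the Schur multiplier $(v_{ij})\mapsto (f^{[1]}(\lambda_i,\lambda_j)\,v_{ij})$ in the eigenbasis of $A$. This reformulation makes explicit that we are dealing with a singular-integral-type operator whose ``base space'' is the spectrum of $A$ equipped with the spectral measure, while the inputs are operator-valued.

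\textbf{Main line.} Next, I would split $f$ by a smooth Littlewood--Paley partition $f=\sum_n f_n$, where $f_n$ concentrates at frequency $\sim 2^n$; the corresponding pieces $T_{f_n^{[1]}}^{A,A}$ are Schur multipliers whose symbols are smooth at scale $2^{-n}$ and decay rapidly away from the diagonal, with $L_2$ norms controlled by $\|f'\|_\infty$ in a summable way. Reassembled, the operator $T_{f^{[1]}}^{A,A}$ should meet the size and H\"ormander-type regularity conditions required by Parcet's theorem, whose application then yields the claimed $L_1\to L_{1,\infty}$ estimate. The $L_2$-boundedness (hypothesis for the Calder\'on--Zygmund extrapolation) is already in hand from the first sentence of the plan.

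\textbf{Main obstacle.} The principal difficulty is verifying a non-commutative H\"ormander-type kernel regularity uniformly in $A$. The underlying spectral measure of $A$ is arbitrary and need not be doubling, so some care is required to embed the problem into Parcet's setup (or into a variant of it adapted to a general semifinite von Neumann algebra). Equally delicate is controlling the cross-scale interactions between different Littlewood--Paley pieces of $f^{[1]}$ so that they telescope to a bound proportional to $\|f'\|_\infty$ rather than accumulating a logarithmic factor in the number of relevant scales. Handling these two issues in tandem -- a spectrum-adapted dyadic decomposition, and a cancellation scheme across scales that fits Parcet's CZ machinery -- is where the technical heart of the proof should lie.
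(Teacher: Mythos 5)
There is a genuine gap here, and it lives exactly where you flag the ``main obstacle.'' Your plan is to make $T_{f^{[1]}}^{A,A}$ itself into a singular integral operator whose base space is the spectrum of $A$ with its spectral measure, decompose $f$ in Littlewood--Paley pieces, and then verify a H\"ormander-type kernel condition so that Parcet's theorem applies. But the spectral measure of $A$ is completely arbitrary (non-doubling, possibly atomic and wildly spaced), and Parcet's Theorem \ref{Thm=Parcet} is stated for convolution kernels on $\mathbb{R}^d$ with Lebesgue measure; there is no version of it that you can invoke for an arbitrary spectral measure. Likewise, there is no mechanism proposed to prevent the logarithmic blow-up across Littlewood--Paley scales. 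These are not technicalities to be dealt with ``in tandem'' later; as stated, there is no route from your setup to Parcet's hypotheses, and the plan stalls at exactly the point you identify.

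The paper goes a very different way and never has to face either obstacle, because it never tries to view $T_{f^{[1]}}^{A,A}$ itself as a CZ operator. The crucial observation is algebraic: after normalizing $\|f'\|_\infty\le1$, the divided difference satisfies
\[
f^{[1]}(\lambda,\mu)=\frac{f(\lambda)-f(\mu)}{\lambda-\mu}=g\bigl(\lambda-\mu,\;f(\lambda)-f(\mu)\bigr),
\]
where $g$ is a \emph{fixed} smooth homogeneous function of degree $0$ on $\mathbb{R}^2\setminus\{0\}$ with $g(e^{i\theta})=\tan\theta$ near the $x$--axis; $g$ does not depend on $f$ or on $A$. The Fourier multiplier $g(\nabla)$ on $L_2(\mathbb{R}^2)$ is a genuine Calder\'on--Zygmund operator (Lemma \ref{g is ok} verifies \eqref{parcet conditions} for $\mathcal F g$), so Parcet's theorem applies to $1\otimes g(\nabla)$ on $L_1(\mathcal{M}\otimes L_\infty(\mathbb{R}^2))$. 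Then, after reducing to $A$ bounded with integer spectrum (Theorem \ref{main lemma}) and eventually approximating as in the proof of Theorem \ref{doi bound}, one conjugates by the unitary $u=\sum_j p_j\otimes e_{(j,f(j))}$ -- which crucially encodes both the eigenvalue $j$ \emph{and} the function value $f(j)$ in the frequency variable -- and uses a de Leeuw-type transference with Gaussians $G_l^{\otimes 2}$ (Section \ref{Sect=DeLeeuw}, Lemmas \ref{first convergence lemma}--\ref{third convergence lemma}, \ref{gl tensor lemma}) to show that $1\otimes (g\phi_m)(\nabla)$ applied to $u(V\otimes G_l^{\otimes2})u^*$ converges in $L_1$, as $l\to\infty$, to $u\bigl(T_{f^{[1]}}^{A,A}(V)\otimes G_l^{\otimes2}\bigr)u^*$, from which the weak $L_1$ bound follows. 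In short: the missing idea is to transfer the Schur multiplier to a single, fixed homogeneous Fourier multiplier on $\mathbb{R}^2$ rather than to build an $A$-adapted CZ theory, and that transference is what eliminates both the non-doubling issue and any Littlewood--Paley summation.
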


Commutator estimate follows from the observation that the double operator integral $T_{f^{[1]}}^{A,A}([A,B])$ equals $[f(A),B].$ As explained in the proof of Theorem \ref{final theorem}, Lipschitz estimates follow from commutator ones.

\subsection*{Acknowledgements} Authors thank Javier Parcet for a detailed explanation of \cite{Parcet}.

\section{Preliminaries}

\subsection{General notation} Let $\mathcal{M}$ be a semifinite von Neumann algebra equipped with a faithful normal semifinite trace $\tau.$ In this paper, we always presume that $\mathcal{M}$ is represented on a separable Hilbert space.

A (closed and densely defined) operator $x$ affiliated to $\mathcal{M}$ is called $\tau-$measurable if $\tau(E_{|x|}(s,\infty))<\infty$ for sufficiently large $s.$ We denote the set of all $\tau-$measurable operators by $S(\mathcal{M},\tau).$ For every $x\in S(\mathcal{M},\tau),$ we define its singular value function $\mu(A)$ by setting
$$\mu(t,x)=\inf\{\|x(1-p)\|_{\infty}:\quad \tau(p)\leq t\}.$$
Equivalently, for positive operator $x\in S(\mathcal{M},\tau),$ we have
$$n_x(s)=\tau(E_x(s,\infty)),\quad \mu(t,x)=\inf\{s: n_x(s)<t\}.$$
We have (see e.g. \cite[Corollary 2.3.16]{LSZ})
\begin{equation}\label{triangle svf}
\mu(t+s,x+y)\leq\mu(t,x)+\mu(s,y),\quad t,s>0.
\end{equation}

\subsection{Non-commutative spaces}
For $1\leq p<\infty$ we set,
$$L_p(\mathcal{M})=\{x\in S(\mathcal{M},\tau):\ \tau(|x|^p)<\infty\},\quad \|x\|_p=(\tau(|x|^p))^{\frac1p}.$$
The Banach spaces $(L_p(\mathcal{M}),\|\cdot\|_p)$, $1\leq p<\infty$ are separable.

Define the space $L_{1,\infty}(\mathcal{M})$ by setting
$$L_{1,\infty}(\mathcal{M})=\{x\in S(\mathcal{M},\tau):\ \sup_{t>0}t\mu(t,x)<\infty\}.$$
We equip $L_{1,\infty}(\mathcal{M})$ with the functional $\|\cdot\|_{1,\infty}$ defined by the formula
$$\|x\|_{1,\infty}=\sup_{t>0}t\mu(t,x),\quad x\in L_{1,\infty}(\mathcal{M}).$$
It follows from \eqref{triangle svf} that
$$\|x+y\|_{1,\infty}=\sup_{t>0}t\mu(t,x+y)\leq\sup_{t>0}t(\mu(\frac{t}{2},x)+\mu(\frac{t}{2},y))\leq$$
$$\leq\sup_{t>0}t\mu(\frac{t}{2},x)+\sup_{t>0}t\mu(\frac{t}{2},y)=2\|x\|_{1,\infty}+2\|y\|_{1,\infty}.$$
In particular, $\|\cdot\|_{1,\infty}$ is a quasi-norm. The quasi-normed space $(L_{1,\infty}(\mathcal{M}),\|\cdot\|_{1,\infty})$ is, in fact, quasi-Banach (see e.g. \cite[Section 7]{KS} or \cite{Sind}). In view of our main result it is important to emphasize that the quasi-norm $\|\cdot\|_{1,\infty}$ is not equivalent to any norm on $L_{1,\infty}(\mathcal{M})$ (see e.g  \cite[Theorem 7.6]{KS}).

\subsection{Weak type inequalities for Calder\'on-Zygmund operators}
Parcet \cite{Parcet} proved a noncommutative extension of Calder\'on-Zygmund theory.

Let $K$ be a tempered distribution which we refer to as the {\it convolution kernel}. We let $W_K$ be the associated Calder\'on-Zygmund operator, formally given by $f \mapsto K \ast f.$ In what follows, we only consider tempered distributions having local values (that is, which can be identified with measurable functions $K:\mathbb{R}^d\rightarrow\mathbb{C}$).

Let $\mathcal{M}$ be a semi-finite von Neumann algebra with normal, semi-finite, faithful trace $\tau.$ The operator $1\otimes W_K$ can, under suitable conditions, be defined as noncommutative Calder\'on-Zygmund operators by letting them act on the second tensor leg of $L_1(\mathcal{M})\widehat{\otimes}L_1(\mathbb{R}^d).$ The following theorem in particular gives a sufficient condition for such an operator to act from $L_1$ to $L_{1,\infty}.$

\begin{thm}[\cite{Parcet}]\label{Thm=Parcet}
Let $K:\mathbb{R}^d \backslash \{0\} \rightarrow \mathbb{C}$ be a kernel satisfying the conditions\footnote{Here, $\nabla$ denotes the gradient $(\frac1i\frac{\partial}{\partial x_1},\cdots,\frac1i\frac{\partial}{\partial x_d}),$ which is understood as unbounded self-adjoint operator on $L_2(\mathbb{R}^d).$}
\begin{equation}\label{parcet conditions}
|K|(t)\leq\frac{{\rm const}}{|t|^d},\quad |\nabla K|(t)\leq\frac{{\rm const}}{|t|^{d+1}}.
\end{equation}
Let $\mathcal{M}$ be a semi-finite von Neumann algebra. If $W_K\in B(L_2(\mathbb{R}^d)),$ then the operator $1\otimes W_K$ defines a bounded map from $L_1(\mathcal{M}\otimes L_\infty(\mathbb{R}^d))$ to $L_{1,\infty}(\mathcal{M}\otimes L_\infty(\mathbb{R}^d)).$
\end{thm}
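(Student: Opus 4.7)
My plan is to follow the classical Calderón–Zygmund weak type $(1,1)$ scheme, adapted to the semi-commutative von Neumann algebra $\mathcal{N} := \mathcal{M} \otimes L_\infty(\mathbb{R}^d)$. The key novelty is a non-commutative Calderón–Zygmund decomposition built from the Cuculescu projections attached to the dyadic martingale on $\mathbb{R}^d$. Using the $\mathbb{C}$-linear decomposition of $f$ into four positive parts and the quasi-triangle inequality for $\|\cdot\|_{1,\infty}$, it suffices to treat $0 \leq f \in L_1(\mathcal{N})$ with $\|f\|_1 = 1$ and to show that, for each fixed $\lambda > 0$, the distribution function of $|(1 \otimes W_K)f|$ at level $\lambda$ is $O(\lambda^{-1})$.

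\emph{Non-commutative CZ decomposition.} Let $\mathcal{E}_k$ denote the conditional expectation onto $\mathcal{N}_k := \mathcal{M} \otimes L_\infty(\sigma_k)$, where $\sigma_k$ is the dyadic algebra on $\mathbb{R}^d$ at scale $2^{-k}$, and set $f_k = \mathcal{E}_k f$, $d_k = f_k - f_{k-1}$. Cuculescu's construction applied to $(f_k)$ at level $\lambda$ produces a decreasing sequence of projections $q_k \in \mathcal{N}_k$ with $q_k f_k q_k \leq \lambda q_k$ and $\tau\bigl(1 - \inf_k q_k\bigr) \leq \lambda^{-1}$. This gives a decomposition $f = g + b$ in which the good part satisfies $\|g\|_\infty \lesssim \lambda$ and $\|g\|_1 \lesssim 1$ (so $\|g\|_2^2 \lesssim \lambda$), while the bad part $b$ decomposes as a sum of martingale blocks $p_k d_k p_k$ with $p_k = q_{k-1} - q_k$, plus off-diagonal corrections; each block lives over a family of `bad' dyadic cubes whose total Lebesgue measure is $\lesssim \lambda^{-1}$.

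\emph{Good and bad estimates.} For the good part, $W_K \in B(L_2(\mathbb{R}^d))$ and the passivity of the $\mathcal{M}$-tensor leg yield $1 \otimes W_K$ bounded on $L_2(\mathcal{N})$ with the same norm. Chebyshev's inequality then gives
$$\lambda\,\tau\bigl(E_{|(1\otimes W_K)g|}(\lambda,\infty)\bigr) \;\lesssim\; \lambda^{-1}\|g\|_2^2 \;\lesssim\; 1.$$
For the bad part, let $\widehat{Q}$ denote the union of the dilations $2Q$ of the bad cubes, which has Lebesgue measure $\lesssim \lambda^{-1}$. Using the mean-zero property $\mathcal{E}_{k-1}d_k = 0$ together with the gradient bound on $K$, for each block supported in a cube $Q$ of side $2^{-k}$ and centre $y_Q$ I estimate
$$|K(x-y) - K(x-y_Q)| \;\lesssim\; \frac{|y-y_Q|}{|x-y_Q|^{d+1}},$$
and integration in $x$ off $2Q$ produces an $O(1)$ constant. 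Summing over blocks and taking the partial trace on $\mathcal{M}$ gives a uniform $L_1$ bound for $(1\otimes W_K)b$ over $\mathcal{M}\otimes L_\infty(\widehat{Q}^c)$; combining with the good-part estimate via a gluing argument across the projection $1 \otimes \chi_{\widehat{Q}^c}$ closes the weak type $(1,1)$ inequality.

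The main technical obstacle is the rigorous execution of the bad-part step. In the scalar case the atom $b_Q$ is literally supported on the cube $Q$, so the off-cube integral is transparent; here the `support' of each block $p_k d_k p_k$ is only a projection $p_k \in \mathcal{N}_k$ that does not commute with multiplication by $K(\cdot - y)$ in a way that would let one pointwise restrict and apply the kernel. The resolution, due to Parcet, is a non-commutative pseudo-localization estimate: away from the dilated bad set $\widehat{Q}$, $(1 \otimes W_K)b$ can still be controlled in $L_1$ by reducing the analysis to a scalar convolution estimate against the $\mathcal{M}$-valued density of $b$, exploiting that the kernel $K$ acts only on the $L_\infty(\mathbb{R}^d)$-factor while $\mathcal{M}$ enters only through the trace. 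Carrying this reduction through the off-diagonal corrections in the decomposition of $b$ is the technical heart of the argument.
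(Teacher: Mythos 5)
The paper does not prove this statement: Theorem~\ref{Thm=Parcet} is quoted verbatim from Parcet~\cite{Parcet}, where the argument occupies most of a long memoir, and the authors' only engagement with it is the acknowledgement thanking Parcet for explaining it. Judged against that, your write-up is a broadly faithful overview of the global architecture of Parcet's proof rather than an independent argument, and it correctly records the reduction to $0\le f\in L_1(\mathcal{N})$, the dyadic filtration, Cuculescu's construction of the projections $q_k$ with $q_k f_k q_k\le\lambda q_k$ and the trace bound on $1-\bigwedge_k q_k$, the split $f=g+b$, the $L_2$/Chebyshev estimate for the good part, and the role of the gradient condition \eqref{parcet conditions} together with a dilated ``bad set'' in the bad-part estimate.

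The genuine gap is the one you yourself flag in your final paragraph and do not close. Two things need to be proved, and you prove neither: (i) in the semicommutative setting the support of a bad block is only a projection in $\mathcal{M}\otimes L_\infty(\sigma_k)$, not a subset of $\mathbb{R}^d$, so the passage from the Cuculescu trace bound $\tau\bigl(1-\bigwedge_k q_k\bigr)\lesssim\lambda^{-1}$ to an excisable set $\widehat{Q}\subset\mathbb{R}^d$ of controlled Lebesgue measure, and the accompanying ``gluing across $1\otimes\chi_{\widehat{Q}^c}$'', already require a nontrivial argument about the diagonal structure of the $q_k$; (ii) the off-diagonal terms of the noncommutative CZ decomposition (the pieces of $b$ of the schematic form $p_k d_k q_k+q_k d_k p_k$ and their sums) are \emph{not} supported in any dilated bad set once summed over $k$, and the estimate that handles them is precisely Parcet's pseudo-localization theorem, an almost-orthogonality argument giving quantitative $L_2$-decay of singular integrals off dyadic dilations, not a reduction to a scalar convolution against an $\mathcal{M}$-valued density as your last paragraph suggests. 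Your own sentence ``the resolution, due to Parcet, is a non-commutative pseudo-localization estimate'' is honest, but it makes the step you call the technical heart a citation rather than a proof, which is exactly what the paper itself does; as a self-contained argument the proposal therefore breaks down at the bad-part/off-diagonal step.
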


\subsection{Double operator integrals}\label{doi subsection} Let $A=A^*$ be an operator affiliated with $\mathcal{M}.$ Symbolically, a double operator integral is defined by the formula
\begin{equation}\label{doi def}
T_{\xi}^{A,A}(V)=\int_{\mathbb{R}^2}\xi(\lambda,\mu)dE_A(\lambda)VE_A(\mu),\quad V\in L_2(\mathcal{M}).
\end{equation}
In the subsequent paragraph, we provide a rigorous definition of the double operator integral.

Consider projection valued measures on $\mathbb{R}$ acting on the Hilbert space $L_2(\mathcal{M})$ by the formulae $x\to E_A(\mathcal{B})x$ and $x\to xE_A(\mathcal{B}).$ These spectral measures commute and, hence (see Theorem V.2.6 in \cite{BirSol}), there exists a countably additive (in the strong operator topology) projection-valued measure $\nu$ on $\mathbb{R}^2$ acting on the Hilbert space $L_2(\mathcal{M})$ by the formula
\begin{equation}\label{birsol spectral measure}
\nu(\mathcal{B}_1\otimes\mathcal{B}_2):x\to E_A(\mathcal{B}_1)xE_A(\mathcal{B}_2),\quad x\in L_2(\mathcal{M}).
\end{equation}
Integrating a bounded Borel function $\xi$ on $\mathbb{R}^2$ with respect to the measure $\nu$ produces a bounded operator acting on the Hilbert space $L_2(\mathcal{M}).$ In what follows, we denote the latter operator by $T_{\xi}^{A,A}$ (see also \cite[Remark 3.1]{PSW}).

In the special case when $A$ is bounded and ${\rm spec}(A)\subset\mathbb{Z},$ we have 
\begin{equation}\label{doi special}
T_{\xi}^{A,A}(V)=\sum_{k,l\in\mathbb{Z}}\xi(k,l)E_A(\{k\})VE_A(\{l\}).
\end{equation}

We are mostly interested in the case $\xi=f^{[1]}$ for a Lipschitz function $f.$ Here,
$$f^{[1]}(\lambda,\mu)=
\begin{cases}
\frac{f(\lambda)-f(\mu)}{\lambda-\mu},\quad \lambda\neq\mu\\
0,\quad \lambda=\mu.
\end{cases}
$$

\section{Approximate intertwining properties of Fourier multipliers}\label{Sect=DeLeeuw}

We prove intertwining properties of Fourier multipliers partly inspired by K. de Leeuw's proof of his restriction theorem for $L^p$-multipliers \cite[Section 2]{de Leeuw}.

In what follows,
$$G_l(s)=\frac1{l\sqrt{\pi}}e^{-(\frac{s}{l})^2},\quad s\in\mathbb{R}, \quad l>0.$$
That is, $G_l$ is a probability density function for certain Gaussian random variable. The notation $G_l^{\otimes d}$ stands for the function from $L_1(\mathbb{R}^d)$ given by the tensor product of $G_l$ with itself repeated $d$ times.

\begin{lem}\label{first convergence lemma} For every $f\in L_1(\mathbb{R})$ with $\int_{-\infty}^{\infty}f(s)ds=0,$ we have $f\ast G_l\to0$ in $L_1(\mathbb{R})$ as $l\to\infty.$
\end{lem}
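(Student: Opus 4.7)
The plan is to exploit the mean-zero condition to turn the convolution into a quantity that measures a translation of $G_l$, and then use the fact that $G_l$ varies slowly (in $L_1$) under translation as $l \to \infty$.

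First, since $\int f = 0$ and $\int G_l = 1$, I would write, for every $x \in \mathbb{R}$,
$$(f \ast G_l)(x) = \int_{\mathbb{R}} f(y) G_l(x-y)\,dy = \int_{\mathbb{R}} f(y)\bigl[G_l(x-y) - G_l(x)\bigr]\,dy,$$
using that $\int_{\mathbb{R}} f(y) G_l(x)\,dy = G_l(x)\int f = 0$. Taking the $L_1$-norm in $x$ and applying Fubini (Minkowski's integral inequality) gives
$$\|f \ast G_l\|_1 \leq \int_{\mathbb{R}} |f(y)|\, \|G_l(\cdot - y) - G_l\|_1\,dy.$$

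Next, I would compute the inner $L_1$-norm explicitly. The change of variables $u = x/l$ yields
$$\|G_l(\cdot - y) - G_l\|_1 = \frac{1}{\sqrt{\pi}}\int_{\mathbb{R}} \bigl|e^{-(u - y/l)^2} - e^{-u^2}\bigr|\,du.$$
Since $y/l \to 0$ as $l \to \infty$, continuity of translation in $L_1$ applied to the fixed Gaussian $u \mapsto e^{-u^2}$ shows that the right-hand side tends to $0$ for each fixed $y$. Moreover $\|G_l(\cdot - y) - G_l\|_1 \leq 2\|G_l\|_1 = 2$ uniformly in $l$ and $y$, so the integrand in the outer integral is dominated by $2|f(y)|$, which is integrable.

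Dominated convergence then finishes the argument. The only subtlety is verifying the uniform-in-$l$ bound and the pointwise-in-$y$ limit before invoking dominated convergence; neither step is a real obstacle, so I expect no difficulty. The conceptual point — that the mean-zero hypothesis is exactly what converts the Gaussian approximate identity (which would otherwise preserve $L_1$-mass) into a smoothing operation whose output vanishes in $L_1$ at large scales — is the entire content of the lemma.
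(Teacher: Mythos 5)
Your proof is correct. Let me compare it with the paper's, since the two routes are genuinely different.

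You handle general $f \in L_1$ in one shot: after inserting the mean-zero term $G_l(x)\int f$, Fubini reduces the problem to showing $\|G_l(\cdot - y) - G_l\|_1 \to 0$ for each fixed $y$, which after rescaling is just continuity of translation in $L_1$ applied to the fixed Gaussian $e^{-u^2}$; dominated convergence (with dominating function $2|f(y)|$) then closes the argument. The paper instead runs a density argument: it first proves the statement for mean-zero step functions $f = \sum_k \alpha_k\chi_{I_k}$ by writing $f\ast G_l$ as a sum of differences $F(\frac{t-a_k}{l}) - F(\frac{t-b_k}{l})$ of the Gaussian primitive $F = \int_{-\infty}^\cdot G_1$ and expanding $F$ to second order, and then transfers to general $f$ via Young's inequality and the density of mean-zero step functions in the mean-zero subspace of $L_1$. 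Both proofs ultimately rely on some density fact (yours uses it implicitly through translation-continuity in $L_1$), but yours avoids the case split and the explicit Taylor estimate, and makes the role of the mean-zero hypothesis more transparent: it is precisely what lets one replace $G_l(x-y)$ by the increment $G_l(x-y)-G_l(x)$, turning an approximate identity into a decaying operator. The paper's version has the minor advantage of producing an explicit $O(l^{-1})$ rate for step functions, but the lemma's statement does not require a rate, so your version is, if anything, cleaner.
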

\begin{proof} Suppose first that $f$ is a step function of the form $f=\sum_{k=1}^m\alpha_k\chi_{I_k},$ $m\ge 1$ where $I_k=[a_k,b_k],\ 1\leq k\leq m$ are disjoint intervals and $\sum_{k=1}^m\alpha_km(I_k)=0.$ 

We have
$$(f\ast G_l)(t)=\sum_{k=1}^m\alpha_k\int_{a_k}^{b_k}G_l(t-s)ds=\sum_{k=1}^m\alpha_k\int_{t-b_k}^{t-a_k}G_l(u)du=$$
$$=\sum_{k=1}^m\alpha_k\int_{\frac{t-b_k}{l}}^{\frac{t-a_k}{l}}G_1(s)ds=\sum_{k=1}^m\alpha_k(F(\frac{t-a_k}{l})-F(\frac{t-b_k}{l})),$$
where $F(t)=\int_{-\infty}^tG_1(s)ds.$ To prove the assertion for our $f,$ it suffices to show that
$$l\int_{-\infty}^{\infty}\left|\sum_{k=1}^m\alpha_k(F(t-\frac{a_k}{l})-F(t-\frac{b_k}{l}))\right|dt\to0$$
Clearly,
$$\left|F(t-\frac{a_k}{l})-F(t)+\frac{a_k}{l}F'(t)\right|\leq\frac{a_k^2}{2l^2}\max\limits_{s\in[t-\frac{a_k}{l},t]}|F''(s)|.$$

If $l>\max_{1\leq k\leq m}|a_k|$ and $l>\max_{1\leq k\leq m}|b_k|,$ then 
$$\left|\sum_{k=1}^m\alpha_k(F(t-\frac{a_k}{l})-F(t-\frac{b_k}{l}))\right|\leq\frac1{2l^2}(\sum_{k=1}^m|\alpha_k|(a_k^2+b_k^2))\max\limits_{s\in[t-1,t+1]}|F''(s)|.$$
This proves the assertion for $f$ as above.

To prove the assertion in general, fix $f_m$ as above (i.e., mean zero step functions) such that $f_m\to f$ in $L_1(\mathbb{R}).$ Since $\|G_l\|_1=1,$ it follows from Young's inequality that
$$\|f\ast G_l\|_1\leq\|(f-f_m)\ast G_l\|_1+\|f_m\ast G_l\|_1\leq \|f-f_m\|_1+\|f_m\ast G_l\|_1.$$
Therefore,
$$\limsup_{l\to\infty}\|f\ast G_l\|_1\leq\|f-f_m\|_1.$$
Passing $m\to\infty,$ we conclude the proof.
\end{proof}

By Fubini Theorem, linear span of elementary tensors
$$(f_1\otimes\cdots\otimes f_d):(t_1,\cdots,t_d)\to  f_1(t_1)\cdots f_d(t_d),\quad f_1,\cdots,f_d\in L_1(\mathbb{R})$$
is dense in $L_1(\mathbb{R}^d).$

\begin{lem}\label{second convergence lemma} For every $f\in L_1(\mathbb{R}^d)$ with $\int_{\mathbb{R}^d}f(s)ds=0,$ we have $f\ast G_l^{\otimes d}\to0$ in $L_1(\mathbb{R}^d)$ as $l\to\infty.$
\end{lem}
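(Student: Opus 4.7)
My plan is to prove Lemma~\ref{second convergence lemma} by induction on $d$, reducing to Lemma~\ref{first convergence lemma} via the density (stated just above the lemma) of elementary tensors $u\otimes v$ in $L_1(\mathbb{R}^d)\cong L_1(\mathbb{R})\widehat{\otimes}L_1(\mathbb{R}^{d-1})$. The base case $d=1$ is exactly Lemma~\ref{first convergence lemma}. For the inductive step I fix once and for all a probability density $\phi\in L_1(\mathbb{R})$ and a probability density $\psi\in L_1(\mathbb{R}^{d-1})$, to play the role of reference ``non-zero mean'' factors.

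Given $f$ with $\int f=0$ and $\varepsilon>0$, I first approximate $f$ to within $\varepsilon$ in $L_1$ by a finite sum of elementary tensors $\tilde f=\sum_{k} u_k\otimes v_k$. The approximation need not have zero total integral, so I correct it to $\bar f:=\tilde f-(\int\tilde f)\,\phi\otimes\psi$; this is still a finite sum of elementary tensors, now satisfies $\int\bar f=0$, and obeys $\|f-\bar f\|_1\leq 2\varepsilon$ because $\|\phi\otimes\psi\|_1=1$.

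Next, I decompose each elementary summand $p\otimes q$ of $\bar f$ by writing $p=(p-m_p\phi)+m_p\phi$ with $m_p:=\int p$, and similarly $q=(q-m_q\psi)+m_q\psi$. Expanding $p\otimes q$ yields four pieces. Summed over all elementary summands of $\bar f$, the ``both-mean'' contributions $m_p m_q\,\phi\otimes\psi$ collect into $(\int\bar f)\,\phi\otimes\psi=0$, which is the essential cancellation and the sole reason the mean-correction step is carried out globally rather than summand by summand. Each of the remaining three types of pieces contains at least one zero-mean factor; since convolution respects tensor products, $\|(a\otimes b)\ast G_l^{\otimes d}\|_1=\|a\ast G_l\|_1\cdot\|b\ast G_l^{\otimes(d-1)}\|_1$, and Young's inequality bounds the non-zero-mean factor by a constant. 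The zero-mean factor then shrinks: in the first variable by Lemma~\ref{first convergence lemma}, in the remaining $d-1$ variables by the inductive hypothesis. Since only finitely many elementary summands appear, I conclude $\|\bar f\ast G_l^{\otimes d}\|_1\to 0$. Applying Young's inequality to $f-\bar f$ then gives $\limsup_{l\to\infty}\|f\ast G_l^{\otimes d}\|_1\leq\|f-\bar f\|_1\leq 2\varepsilon$, and letting $\varepsilon\to 0$ finishes the proof.

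The subtle point is precisely this cancellation of the $\phi\otimes\psi$ pieces: individually they are stationary under convolution with $G_l^{\otimes d}$, so the whole argument depends on preserving the global zero-mean hypothesis when passing from $f$ to its tensor approximant, rather than trying to impose mean zero on each rank-one summand.
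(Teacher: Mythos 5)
You take an inductive route: split $L_1(\mathbb{R}^d)\cong L_1(\mathbb{R})\widehat{\otimes}L_1(\mathbb{R}^{d-1})$, decompose each rank-one summand $p\otimes q$ into four pieces by subtracting $m_p\phi$ and $m_q\psi$ for fixed reference probability densities $\phi,\psi$, then invoke Lemma~\ref{first convergence lemma} on the $\mathbb{R}$ factor and the inductive hypothesis on the $\mathbb{R}^{d-1}$ factor. The paper works non-recursively in $L_1(\mathbb{R})^{\widehat\otimes d}$, decomposing each summand $\bigotimes_{j=1}^d f_{jk}$ into $2^d$ pieces indexed by subsets $\mathscr A\subset\{1,\dots,d\}$ and using $\chi_{(0,1)}$ as the single reference density. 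The essential mechanism is identical in both: the unique ``all-mean'' piece ($\mathscr A=\varnothing$ in the paper, $m_p m_q\,\phi\otimes\psi$ in yours) aggregates to $(\int f)\cdot(\text{reference})^{\otimes d}=0$, which is exactly why the zero-mean correction must be performed globally and not on each rank-one summand. Your induction is a clean re-packaging of the same combinatorics — the $2^d$ subsets are what one gets by unrolling your $d-1$ nested binary splits. One small advantage of your write-up is that you make explicit how to pass from a general $L_1$-approximant to a \emph{mean-zero} finite tensor approximant (the $\bar f=\tilde f-(\int\tilde f)\,\phi\otimes\psi$ step), whereas the paper simply asserts that such approximants can be chosen. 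Your argument is correct.
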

\begin{proof} Suppose first that $f$ is a linear combination of elementary tensors. That is,
\begin{equation}\label{f first decom}
f=\sum_{k=1}^m\bigotimes_{j=1}^df_{jk},\quad f_{jk}\in L_1(\mathbb{R}).
\end{equation}
Firstly, we consider the case when for every $k$, $1\leq k\leq m$ there exists $j$, $1\leq j\leq d$ such that $\int_{\mathbb{R}}f_{jk}(s)=0.$ In this case, by Lemma \ref{first convergence lemma} we have that
$$\|f\ast G_l^{\otimes d}\|_1\leq\sum_{k=1}^m\prod_{j=1}^d\|f_{jk}\ast G_l\|_1\to0.$$
Now, we show that the case of $f$ given by \eqref{f first decom} satisfying
\begin{equation}\label{mean zero cond}
\sum_{k=1}^m\prod_{j=1}^d\int_{\mathbb{R}}f_{jk}(s)ds=0.
\end{equation}
can be reduced to the just considered case when for every $k$, $1\leq k\leq m$ there exists $j$, $1\leq j\leq d$ such that 
$\int_{\mathbb{R}}f_{jk}(s)=0.$ To this end, 
for every subset $\mathscr{A}\subset\{1,\cdots,d\},$ we set
$$f_{j,k,\mathscr{A}}=
\begin{cases}
f_{jk}-(\int_{\mathbb{R}}f_{jk}(s)ds)\chi_{(0,1)},\quad j\in\mathscr{A}\\
(\int_{\mathbb{R}}f_{jk}(s)ds)\chi_{(0,1)},\quad j\notin\mathscr{A}.
\end{cases}
$$
By the linearity, we can rewrite \eqref{f first decom} as 
$$
f=\sum_{k=1}^m\sum_{\mathscr{A}\subset\{1,\cdots,d\}}\bigotimes_{j=1}^df_{j,k,\mathscr{A}}
$$
Observing now that
$$
\sum_{k=1}^m\bigotimes_{j=1}^df_{j,k,\varnothing}=(\sum_{k=1}^m\prod_{j=1}^d\int_{\mathbb{R}}f_{jk}(s)ds)\chi_{(0,1)}^{\otimes d}
$$
and appealing to \eqref{mean zero cond}, we arrive at 
\begin{equation}\label{f second decom}
f=\sum_{k=1}^m\sum_{\varnothing\neq\mathscr{A}\subset\{1,\cdots,d\}}\bigotimes_{j=1}^df_{j,k,\mathscr{A}}.
\end{equation}

Note that $f_{j,k,\mathscr{A}}$ is mean zero for $j\in\mathscr{A}.$ Using representation \eqref{f second decom} instead of \eqref{f first decom} for $f$, we may assume without loss of generality that for every $k$, $1\leq k\leq m$ there exists $j$, $1\leq j\leq d$ such that $\int_{\mathbb{R}}f_{jk}(s)=0.$ This completes the proof of the lemma in the special case when $f$ is given by \eqref{f first decom} and satisfies \eqref{mean zero cond}.

To prove the general case, fix  $f\in L_1(\mathbb{R}^d)$ with $\int_{\mathbb{R}^d}f(s)ds=0,$ and select a sequence $\{f_m\}_{m=1}^\infty$ of mean zero sums of elementary tensors such that $f_m\to f$ in $L_1(\mathbb{R}^d)$ as $m\to \infty$. Since $\|G_l^{\otimes d}\|_1=1,\ l\ge 1$ it follows from Young inequality that
$$\|f\ast G_l^{\otimes d}\|_1\leq\|(f-f_m)\ast G_l^{\otimes d}\|_1+\|f_m\ast G_l^{\otimes d}\|_1\leq \|f-f_m\|_1+\|f_m\ast G_l^{\otimes d}\|_1.$$
Therefore,
$$\limsup_{l\to\infty}\|f\ast G_l^{\otimes d}\|_1\leq\|f-f_m\|_1.$$
Passing $m\to\infty,$ we conclude the proof.
\end{proof}

In what follows,
\begin{equation}\label{ek def}
e_k(t):=e^{i\langle k,t\rangle},\quad k,t\in\mathbb{R}^d
\end{equation}
and $\mathcal{F}$ stands for the Fourier transform.

\begin{lem}\label{third convergence lemma} If $g\in L_\infty(\mathbb{R}^d)$ is such that $\mathcal{F}(g)\in L_1(\mathbb{R}^d),$ then for every $k\in\mathbb{R}^d$ we have
$$(g(\nabla))(G_l^{\otimes d}e_k)- g(k)G_l^{\otimes d}e_k\to0$$
in $L_1(\mathbb{R}^d)$ as $l\to\infty.$
\end{lem}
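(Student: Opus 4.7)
The plan is to reduce the claim to Lemma~\ref{second convergence lemma} by a direct computation that pulls the oscillating factor $e_k$ outside the convolution. First, the hypothesis $\mathcal{F}(g)\in L_1(\mathbb{R}^d)$ forces $\check g:=\mathcal{F}^{-1}(g)\in L_1(\mathbb{R}^d)$, and the Fourier multiplier $g(\nabla)$ then acts on the Schwartz function $G_l^{\otimes d}e_k$ simply as convolution with $\check g$. Thus
$$
g(\nabla)(G_l^{\otimes d}e_k)(t) \;=\; \int_{\mathbb{R}^d}\check g(s)\,G_l^{\otimes d}(t-s)\,e_k(t-s)\,ds \;=\; e_k(t)\,\bigl(\phi\ast G_l^{\otimes d}\bigr)(t),
$$
where $\phi(s):=\check g(s)\,e^{-i\langle k,s\rangle}$. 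By Fourier inversion, $\int_{\mathbb{R}^d}\phi(s)\,ds=g(k)$, and since $|e_k|\equiv 1$ the problem reduces to showing
$$
\bigl\|\phi\ast G_l^{\otimes d}-g(k)\,G_l^{\otimes d}\bigr\|_1\;\longrightarrow\;0\quad\text{as } l\to\infty.
$$

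To bring Lemma~\ref{second convergence lemma} to bear, fix any auxiliary $\rho\in L_1(\mathbb{R}^d)$ with $\int\rho=1$ (for instance $\rho=\chi_{[0,1)^d}$) and set $\psi:=\phi-g(k)\rho\in L_1(\mathbb{R}^d)$, which has integral zero. The decomposition
$$
\phi\ast G_l^{\otimes d}-g(k)\,G_l^{\otimes d} \;=\; \psi\ast G_l^{\otimes d}\;+\;g(k)\bigl(\rho\ast G_l^{\otimes d}-G_l^{\otimes d}\bigr)
$$
splits the task in two. The first summand tends to $0$ in $L_1(\mathbb{R}^d)$ directly by Lemma~\ref{second convergence lemma}.

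For the second summand, using $\int\rho=1$, we write
$$
\bigl(\rho\ast G_l^{\otimes d}-G_l^{\otimes d}\bigr)(t) \;=\; \int_{\mathbb{R}^d}\rho(s)\bigl(G_l^{\otimes d}(t-s)-G_l^{\otimes d}(t)\bigr)\,ds,
$$
so Minkowski's integral inequality bounds its $L_1$-norm by $\int|\rho(s)|\,\|G_l^{\otimes d}(\cdot-s)-G_l^{\otimes d}\|_1\,ds$. The rescaling $t=lu$ identifies this inner norm with $\|G_1^{\otimes d}(\cdot-s/l)-G_1^{\otimes d}\|_1$, which vanishes as $l\to\infty$ for each fixed $s$ by continuity of translation in $L_1(\mathbb{R}^d)$; dominated convergence with envelope $2|\rho(s)|$ then closes the argument. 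The only real obstacle is bookkeeping with the Fourier conventions so that the crucial identity $\int\phi=g(k)$ is verified; once this is in place, Lemma~\ref{second convergence lemma} handles the heart of the matter.
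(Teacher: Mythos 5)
Your proof is correct, and it reaches Lemma~\ref{second convergence lemma} by a genuinely different reduction than the paper. The paper keeps the modulated object $G_l^{\otimes d}e_k$ intact and instead splits the \emph{symbol}: it writes $g = h_0 + h_1$ with $h_1(t) = g(k)e^{-|t-k|^2}$, computes the action of $h_1(\nabla)$ exactly on the Fourier side (a Gaussian convolved with a Gaussian is a Gaussian, giving $h_1(\nabla)(G_l^{\otimes d}e_k) = g(k)\,G_{(l^2+1)^{1/2}}^{\otimes d}\,e_k$, which differs from $g(k)G_l^{\otimes d}e_k$ by an $o(1)$ term in $L_1$), and reduces the $h_0$ part to Lemma~\ref{second convergence lemma} using $h_0(k)=0$. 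You instead conjugate out the modulation $e_k$ first and split the \emph{convolution kernel} $\phi = \check g\,e_{-k}$ (with $\int\phi = g(k)$) as $\phi = (\phi - g(k)\rho) + g(k)\rho$, shipping the mean-zero part to Lemma~\ref{second convergence lemma} and handling the $\rho$-part by a Minkowski estimate plus continuity of translation in $L_1$ via the rescaling $\|G_l^{\otimes d}(\cdot - s) - G_l^{\otimes d}\|_1 = \|G_1^{\otimes d}(\cdot - s/l) - G_1^{\otimes d}\|_1 \to 0$. Both arguments hinge on the same two ingredients — the identity "integral of the kernel equals symbol at $k$" and Lemma~\ref{second convergence lemma} — but what you buy is that you avoid the explicit Gaussian Fourier calculus and the observation $G_{(l^2+1)^{1/2}}^{\otimes d} - G_l^{\otimes d}\to 0$, trading it for a dominated-convergence argument that works for any reference $\rho\in L_1$ of integral one, which is somewhat more elementary; the paper's Gaussian choice buys exactness of the correction term at the cost of being tied to the Gaussian structure.
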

\begin{proof} Fix $k\in\mathbb{R}^d.$ Set $h_1(t): =g(k)e^{-|t-k|^2}$ and $h_0(t):=g(t)-h_1(t)$,  $t\in\mathbb{R}^d.$ Observe that, for every $t\in\mathbb{R}^d,$ we have
$$\mathcal{F}(G_l^{\otimes d})(t)=\pi^{-d/2}e^{-l^2|t|^2}.$$
Since $h_1(\nabla)$ on the Fourier side is a multiplier on $h_1,$ it follows that, for every $t\in \mathbb{R}^d,$ 
$$\mathcal{F}(G_l^{\otimes d}e_k)(t)=\pi^{-d/2}e^{-l^2|t-k|^2},\quad (\mathcal{F}((h_1(\nabla))(G_l^{\otimes d}e_k)))(t)=g(k)\pi^{-d/2}e^{-(l^2+1)|t-k|^2}.$$
Applying the inverse Fourier transform to the second equality, we arrive at
$$(h_1(\nabla))(G_l^{\otimes d}e_k)=g(k)G_{(l^2+1)^{1/2}}^{\otimes d}e_k.$$
A direct computation yields $G_{(l^2+1)^{1/2}}^{\otimes d}-G_l^{\otimes d}\to 0$ in $L_1(\mathbb{R}^d)$ as $l\to\infty.$ We conclude that
$$(h_1(\nabla))(G_l^{\otimes d}e_k)-g(k)G_l^{\otimes d}e_k\to 0$$
in $L_1(\mathbb{R}^d)$ as $l\to\infty.$ It, therefore, suffices to show that
$$(h_0(\nabla))(G_l^{\otimes d}e_k)\to 0$$
in $L_1(\mathbb{R}^d)$ as $l\to\infty.$ Define the function $f\in L_1(\mathbb{R}^d)$ by setting $f(t)=e^{i\langle k,t\rangle}(\mathcal{F}h_0)(t),$ $t\in\mathbb{R}^d.$ We rewrite the latter equation as $f\ast G_l^{\otimes d}\to0$ as $l\to\infty.$ Note that
$$\int_{\mathbb{R}^d}f(s)ds=\int_{\mathbb{R}^d}e^{i\langle k,s\rangle}(\mathcal{F}h_0)(s)ds=h_0(k)=0.$$
The assertion follows now from the Lemma \ref{second convergence lemma}.
\end{proof}

\section{Proof of Theorem \ref{doi bound} in the special case}

For $s>0,$ the dilation operator $\sigma_s$ acts on the space of Lebesgue measurable functions on $\mathbb{R},$ by the formula $(\sigma_sx)(t)=x(t/s).$

\begin{lem}\label{tensor mu} Let  $x,y$ be measurable and $\theta$ be integrable functions on $\mathbb{R}.$ Let $z(t):=t^{-1},$ $t>0,$ $z(t)=0,$ $t<0,$ and let $u>0$. For Lebesgue measurable functions 
$x\otimes y$ and $\theta \otimes z$ on $\mathbb{R}^2,$ we have
$$\mu(\sigma_u(x)\otimes y)=\sigma_u\mu(x\otimes y),  \quad \mu(t,\theta \otimes z)=\|\theta\|_1t^{-1},\quad t>0.$$
\end{lem}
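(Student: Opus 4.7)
The plan is to prove each equality by directly computing the distribution function and then inverting, using the definition $\mu(t,x)=\inf\{\lambda:n_x(\lambda)<t\}$ recalled in Section~2.

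For the first identity, I would compute the distribution function of $\sigma_u(x)\otimes y$ on $\mathbb{R}^2$ by the change of variable $s'=s/u$:
\begin{align*}
n_{\sigma_u(x)\otimes y}(\lambda)
&= m\{(s,t)\in\mathbb{R}^2 : |x(s/u)y(t)|>\lambda\} \\
&= u\cdot m\{(s',t)\in\mathbb{R}^2 : |x(s')y(t)|>\lambda\}=u\cdot n_{x\otimes y}(\lambda).
\end{align*}
Inverting yields $\mu(t,\sigma_u(x)\otimes y)=\inf\{\lambda:u\,n_{x\otimes y}(\lambda)<t\}=\mu(t/u,x\otimes y)$, which is exactly $(\sigma_u\mu(x\otimes y))(t)$.

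For the second identity, I would use Fubini to compute, for $\lambda>0$,
\[
n_{\theta\otimes z}(\lambda)
= \int_{\mathbb{R}} m\{t>0 : |\theta(s)|/t > \lambda\}\,ds
= \int_{\mathbb{R}} \frac{|\theta(s)|}{\lambda}\,ds
= \frac{\|\theta\|_1}{\lambda},
\]
where the middle step uses that $\{t>0 : |\theta(s)|/t>\lambda\}=(0,|\theta(s)|/\lambda)$. Inverting again, $\mu(t,\theta\otimes z)=\inf\{\lambda:\|\theta\|_1/\lambda<t\}=\|\theta\|_1 t^{-1}$.

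There is no real obstacle; both facts are essentially bookkeeping with the definition of the decreasing rearrangement. The only point requiring slight care is that $x,y$ are only assumed measurable in the first claim, so $n_{x\otimes y}$ may be identically $+\infty$ on some half-line, but the identity remains valid as an equality of $[0,+\infty]$-valued functions, and the same interpretation makes $\mu(\cdot,\sigma_u(x)\otimes y)=\sigma_u\mu(\cdot,x\otimes y)$ well-defined.
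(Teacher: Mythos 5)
Your first identity is proved exactly as in the paper: compute the distribution function of $\sigma_u(x)\otimes y$ by the substitution $s\mapsto s/u$, get $n_{\sigma_u(x)\otimes y}=u\,n_{x\otimes y}$, and invert. For the second identity, however, you take a genuinely different and rather more direct route. The paper first treats step functions $\theta=\sum_k a_k\chi_{B_k}$: it decomposes $\theta\otimes z$ as a disjoint sum, invokes the existence of a measure-preserving bijection $B_k\to(0,m(B_k))$ (Halmos--von Neumann) to identify $\mu(\chi_{B_k}\otimes z)=m(B_k)z$, sums the pieces, and passes to general $\theta\in L_1$ by approximation. You instead compute the distribution function of $\theta\otimes z$ directly via Tonelli:
\[
n_{\theta\otimes z}(\lambda)=\int_{\mathbb{R}} m\bigl\{t>0:\ |\theta(s)|/t>\lambda\bigr\}\,ds=\int_{\mathbb{R}}\frac{|\theta(s)|}{\lambda}\,ds=\frac{\|\theta\|_1}{\lambda},
\]
and invert. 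This avoids both the reduction to simple functions and the appeal to measure-preserving bijections, and it handles general integrable $\theta$ in one stroke. Your closing remark about the $[0,+\infty]$-valued interpretation when $x,y$ are merely measurable is a sensible caveat that the paper leaves implicit.
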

\begin{proof} Denoting Lebesgue measure on $\mathbb{R}^2$ by $m$, we have for every $t>0$
\begin{align*}m(\{\sigma_u(x)\otimes y>t \})&=m(\{(s_1,s_2):\ x(\frac{s_1}{u})y(s_2)>t\})\cr
&=um(\{(s_1,s_2):\ x(s_1)y(s_2)>t\})\cr & =um(\{x\otimes y>t \}).
\end{align*}
This proves the first assertion.

Firstly, we prove the second assertion for simple function $x\in L_1(\mathbb{R}).$  If $x=\sum_ka_k\chi_{B_k}$ with $B_k$ being pairwise disjoint sets, then\footnote{The notation $\bigoplus_k x_k$ stands for disjoint sum of the functions $x_k,$ that is $\sum_k z_k,$ where functions $z_k$ have pairwise disjoint support and $\mu(z_k)=\mu(x_k).$ We refer the reader to the Definition 2.4.3 in \cite{LSZ} and subsequent comments.}
$$\mu(x\otimes z)=\mu(\bigoplus_k (a_k\chi_{B_k}\otimes z))=\mu(\bigoplus_k\mu((a_k\chi_{B_k}\otimes z)).$$
If $B$ is a set of finite measure, then there exists a measure preserving bijection from $B$ to $(0,m(B))$ (see \cite{HN}). Therefore, we have
$$\mu(\chi_B\otimes z)=\mu(\chi_{(0,m(B))}\otimes z)=m(B)z.$$
Thus,
$$\mu(x\otimes z)=\mu(\bigoplus_k|a_k|m(B_k)z)=(\sum_k a_km(B_k))z.$$
The second assertion follows now by approximation.
\end{proof}

\begin{lem}\label{gl tensor lemma} For every $X\in L_{1,\infty}(\mathcal{M})$ and every $l>0$, we have
$$e^{-d}\pi^{-\frac{d}{2}}\|X\|_{1,\infty}\leq\|X\otimes G_l^{\otimes d}\|_{1,\infty}\leq\|X\|_{1,\infty}.$$
\end{lem}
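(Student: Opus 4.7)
The plan is to work entirely with the scalar distribution function on the tensor algebra $\mathcal{M}\otimes L_\infty(\mathbb{R}^d)$ equipped with the trace $\tau_\otimes=\tau\otimes\!\int\!\cdot\,dt$. Writing $n_Y(s):=\tau_\otimes(E_{|Y|}(s,\infty))$, the standard identity
$$\|Y\|_{1,\infty}=\sup_{s>0}s\,n_Y(s)$$
reduces both inequalities to routine estimates on $n_{X\otimes G_l^{\otimes d}}$. Since $G_l^{\otimes d}\ge 0$ one has $|X\otimes G_l^{\otimes d}|=|X|\otimes G_l^{\otimes d}$, and Fubini gives
$$n_{X\otimes G_l^{\otimes d}}(s)=\int_{\mathbb{R}^d}n_X\!\Bigl(\frac{s}{G_l^{\otimes d}(t)}\Bigr)\,dt.$$

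For the upper bound I would insert the Chebyshev-type estimate $n_X(r)\le\|X\|_{1,\infty}/r$ pointwise in $t$ and use $\|G_l^{\otimes d}\|_1=1$; this yields $s\,n_{X\otimes G_l^{\otimes d}}(s)\le\|X\|_{1,\infty}$ for every $s>0$, whence $\|X\otimes G_l^{\otimes d}\|_{1,\infty}\le\|X\|_{1,\infty}$.

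For the lower bound I would localise to the cube $Q_l:=[-l,l]^d$. There each coordinate satisfies $(t_j/l)^2\le 1$, so
$$G_l^{\otimes d}(t)\ge l^{-d}\pi^{-d/2}e^{-d}\quad\text{for every }t\in Q_l,\qquad m(Q_l)=(2l)^d.$$
Given any $s_0>0$, choose $s:=s_0 e^{-d}l^{-d}\pi^{-d/2}$; then $s/G_l^{\otimes d}(t)\le s_0$ on $Q_l$, and monotonicity of $n_X$ yields
$$n_{X\otimes G_l^{\otimes d}}(s)\ge(2l)^d n_X(s_0).$$
Multiplying through gives $s\,n_{X\otimes G_l^{\otimes d}}(s)\ge 2^d e^{-d}\pi^{-d/2}s_0 n_X(s_0)\ge e^{-d}\pi^{-d/2}s_0 n_X(s_0)$, and taking the supremum over $s_0$ delivers the stated lower bound (with a factor $2^d$ to spare).

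There is no serious obstacle here: this is a bookkeeping lemma comparing distribution functions on $\mathcal{M}\otimes L_\infty(\mathbb{R}^d)$ to those on $\mathcal{M}$. The only genuine choice is the level set of $G_l^{\otimes d}$ used in the lower bound; the cube $[-l,l]^d$ is tailor-made for the dilation structure of $G_l$ and, conveniently, already produces an excess factor of $2^d\ge 1$ against the required constant $e^{-d}\pi^{-d/2}$.
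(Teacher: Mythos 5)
Your proof is correct, and it takes a genuinely different route from the paper's. The paper works on the level of singular value functions: it invokes the factorisation $\mu(A\otimes g)=\mu(\mu(A)\otimes\mu(g))$, uses the preceding lemma on $\mu(\theta\otimes z)$ and the dilation identity $\mu(G_l^{\otimes d})=l^{-d}\sigma_{l^d}\mu(G_1^{\otimes d})$ to reduce to the case $l=1$, and then bounds $\mu(G_1)\geq\frac{1}{e\sqrt{\pi}}\chi_{(0,1)}$. You instead work directly with the distribution function $n_Y(s)=\tau_\otimes(E_{|Y|}(s,\infty))$, the Fubini formula $n_{X\otimes G_l^{\otimes d}}(s)=\int_{\mathbb{R}^d}n_X(s/G_l^{\otimes d}(t))\,dt$, and the elementary identity $\|Y\|_{1,\infty}=\sup_{s>0}s\,n_Y(s)$. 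This sidesteps the rearrangement machinery entirely (you do not need the paper's Lemma~\ref{tensor mu} nor the preliminary $*$-isomorphism argument behind equation \eqref{mu tensor}), and the crude pointwise bound $G_l^{\otimes d}\geq l^{-d}\pi^{-d/2}e^{-d}$ on $[-l,l]^d$ feeds directly into the distribution integral; the dilation invariance is automatically absorbed by the choice $s=s_0e^{-d}l^{-d}\pi^{-d/2}$. Your argument is more elementary and self-contained; the paper's buys a slightly more structural viewpoint (explicit rearrangement of tensor products) which it partially reuses. One small point worth making explicit if you write this up: the identity $\sup_{s>0}s\,n_Y(s)=\sup_{t>0}t\,\mu(t,Y)$, while standard, should be stated with a one-line justification, since the paper defines $\|\cdot\|_{1,\infty}$ via $\mu$.
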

\begin{proof} For every operator $A\in S(\mathcal{M},\tau)$ and for every function $g\in L_{\infty}(0,\infty),$ we have\footnote{Without loss of generality, $\mathcal{M}$ is atomless. Suppose first that $x\in\mathcal{M}$ is $\tau-$compact. By Theorem 2.3.11 in \cite{LSZ}, there exists a trace preserving $*-$isomorphism $i:L_{\infty}(0,\infty)\to \mathcal{M}_1$ such that $i_1(\mu(A))=|A|.$ Consider trace preserving isomorphism $i\otimes 1:L_{\infty}(0,\infty)\otimes L_{\infty}(0,\infty)\to\mathcal{M}\otimes L_{\infty}(0,\infty).$ We have $i(\mu(A)\otimes g)=|A|\otimes g.$ Since every trace preserving $*-$isomorphism preserves singular value function, the claim follows for $\tau-$compact operators. The general case follows by approximation.}
\begin{equation}\label{mu tensor}
\mu(A\otimes g)=\mu(|A|\otimes g)=\mu(\mu(A)\otimes g)=\mu(\mu(A)\otimes \mu(g)).
\end{equation} 
Let $z$ be as in Lemma \ref{tensor mu}. It follows from the definition $\|\cdot\|_{1,\infty}$ that $\mu(X)\leq\|X\|_{1,\infty}z$ and, therefore,
$$\mu(X\otimes G_l^{\otimes d})\stackrel{\eqref{mu tensor}}{=}\mu(\mu(X)\otimes G_l^{\otimes d})\leq\|X\|_{1,\infty}\mu(z\otimes G_l^{\otimes d})\stackrel{L.\ref{tensor mu}}{=}\|X\|_{1,\infty}\mu(z).$$
This proves the right hand side inequality.

On the other hand, $\mu(G_l)=l^{-1}\sigma_l\mu(G).$ By Lemma \ref{tensor mu}, we have $\mu(G_l^{\otimes d})=l^{-d}\sigma_{l^d}\mu(G_1^{\otimes d}).$ Thus,
$$\mu(X\otimes G_l^{\otimes d})\stackrel{\eqref{mu tensor}}{=}\mu(X\otimes l^{-d}\sigma_{l^d}\mu(G_1^{\otimes d}))\stackrel{L.\ref{tensor mu}}{=}l^{-d}\sigma_{l^d}\mu(X\otimes G_1^{\otimes d}).$$
Therefore, we have
$$\|X\otimes G_l^{\otimes d}\|_{1,\infty}=\sup_{t>0}\frac{t}{l^d}\mu(\frac{t}{l^d},X\otimes G_1^{\otimes d})=\sup_{s>0}s\mu(s,X\otimes G_1^{\otimes d})=\|X\otimes G_1^{\otimes d}\|_{1,\infty}.$$
Clearly, $\mu(G_1)\geq\frac1{e\sqrt{\pi}}\chi_{(0,1)}.$ It follows that
$$\|X\otimes G_1^{\otimes d}\|_{1,\infty}\stackrel{\eqref{tensor mu}}{=}\|X\otimes \mu(G_1)^{\otimes d}\|_{1,\infty}\geq\|X\otimes (\frac1{e\sqrt{\pi}}\chi_{(0,1)})^{\otimes d}\|_{1,\infty}=e^{-d}\pi^{-\frac{d}{2}}\|X\|_{1,\infty}.$$
This proves the left hand side inequality.
\end{proof}

The following lemma is ideologically similar to Theorem II.4.3 in \cite{Stein}.

\begin{lem}\label{g is ok} If $g$ is a smooth homogeneous function on $\mathbb{R}^2\backslash\{0\},$ then $\mathcal{F}g$ satisfies (possibly, after some $\delta$ distribution is subtracted) the conditions \eqref{parcet conditions}.
\end{lem}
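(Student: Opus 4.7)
The plan is to exploit the standard Fourier dictionary between smooth degree-zero homogeneous functions and Calder\'on--Zygmund kernels; I will assume $g$ is homogeneous of degree $0$, which is the only interesting case, since that is the homogeneity making $\mathcal{F}g$ degree $-2$, matching the decay required by \eqref{parcet conditions}.

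First reduction: let $c$ be the average of $g$ over $\mathbb{S}^1$ and set $g_0=g-c$. Then $g_0$ is smooth, homogeneous of degree $0$ on $\mathbb{R}^2\setminus\{0\}$, bounded, and has vanishing average on $\mathbb{S}^1$. Since $\mathcal{F}(c\cdot\mathbf{1})$ is a multiple of $\delta_0$, it can be absorbed into the allowed $\delta$ correction, so it suffices to show that $K:=\mathcal{F}g_0$ is a $C^\infty$ function on $\mathbb{R}^2\setminus\{0\}$, homogeneous of degree $-2$, possibly up to a further $\delta_0$.

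To produce a usable formula for $K$, I would introduce a radial dyadic partition of unity: pick $\phi\in C_c^\infty(\mathbb{R}^2)$ supported in $\{1/2\le|y|\le 2\}$ with $\sum_{j\in\mathbb{Z}}\phi(2^{-j}y)=1$ for $y\neq 0$, and set $g_{00}:=g_0\phi\in C_c^\infty(\mathbb{R}^2)$. Homogeneity of $g_0$ yields $g_0(y)\phi(2^{-j}y)=g_{00}(2^{-j}y)$, so on $\mathbb{R}^2\setminus\{0\}$
$$g_0=\sum_{j\in\mathbb{Z}}g_{00}(2^{-j}\cdot),\qquad K(\xi)=\sum_{j\in\mathbb{Z}}2^{2j}\,\widehat{g_{00}}(2^j\xi).$$
Now $\widehat{g_{00}}$ is Schwartz, and the mean-zero condition together with the radiality of $\phi$ forces $\int g_{00}=0$, i.e.\ $\widehat{g_{00}}(0)=0$. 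Rapid decay of $\widehat{g_{00}}$ and its derivatives controls the tail $j\to+\infty$ for any $\xi$ bounded away from $0$, while the factor $2^{2j}$ combined with the vanishing at $0$ controls $j\to-\infty$. Running the same estimate with $\partial^\alpha$ in place of the identity, the series for $K$ converges locally uniformly in every derivative on $\mathbb{R}^2\setminus\{0\}$, so $K$ coincides there with a $C^\infty$ function; distributional homogeneity of $\mathcal{F}g_0$ (degree $-2$) then transfers to this function.

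The Parcet conditions follow by pure scaling: homogeneity of degree $-2$ plus smoothness off $0$ gives $|K(t)|=|t|^{-2}|K(t/|t|)|\le \|K|_{\mathbb{S}^1}\|_\infty\cdot|t|^{-2}$, and each $\partial_i K$ is smooth and homogeneous of degree $-3$, giving $|\nabla K|(t)\lesssim|t|^{-3}$. The one delicate step is passing from the distributional identity for $\mathcal{F}g_0$ to a pointwise smooth formula on $\mathbb{R}^2\setminus\{0\}$, which simultaneously accounts for the potential $\delta_0$ correction (the only homogeneous distribution of degree $-2$ supported at the origin); once that convergence argument is carried out, the CZ bounds are automatic.
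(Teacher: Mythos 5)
Your proof is correct and takes a genuinely different route from the paper's. The paper works purely on the circle: it expands $g$ in a Fourier series $g(e^{i\theta})=\sum_k\alpha_k e^{ik\theta}$, so that $g=\sum_k\alpha_k g_k$ with $g_k(z)=z^k/|z|^k$, and then invokes the explicit formula $\mathcal{F}g_k=\tfrac{|k|}{2\pi i^k}\,g_k(z)/|z|^2$ from \cite[Theorem~II.4.3]{Stein} to conclude at once that $\mathcal{F}g-\alpha_0\delta$ is $h(e^{i\operatorname{Arg} z})/|z|^2$ for a smooth $h$ on the circle, after which the size and gradient bounds in \eqref{parcet conditions} are a one-line homogeneity computation. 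You instead subtract the spherical mean, run a Littlewood--Paley decomposition $g_0=\sum_j g_{00}(2^{-j}\cdot)$ with $g_{00}\in C_c^\infty$ of zero integral (the radiality of $\phi$ is exactly what makes $\int g_{00}=0$ follow from the zero mean of $g_0$ on $\mathbb{S}^1$, so do insist on choosing $\phi$ radial), take Fourier transforms termwise in $\mathcal{S}'$, and sum the resulting Schwartz tails to get a function smooth away from the origin and homogeneous of degree $-2$; the $\delta$ ambiguity is the only degree-$(-2)$ homogeneous distribution supported at $0$. What each approach buys: the paper's argument is shorter here because it delegates the hard analytic fact to Stein's formula for the kernels $g_k$, and is tailored to $d=2$ where the spherical harmonics are just $e^{ik\theta}$; your dyadic argument is self-contained, does not need the explicit Riesz-type transform formula, and works verbatim in any dimension $d\geq 1$ (replacing $2^{2j}$ by $2^{dj}$). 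Either way the CZ estimates are an immediate consequence of smoothness off $0$ plus degree-$(-2)$ homogeneity.
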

\begin{proof} By assumption, $g$ is a smooth function on the circle $\{|z|=1\}.$ Thus,
$$g(e^{i\theta})=\sum_{k\in\mathbb{Z}}\alpha_k e^{ik\theta},$$
where Fourier coefficients decrease faster than every power. Therefore,
$$g=\sum_{k\in\mathbb{Z}}\alpha_k g_k,\quad g_k(z)=\frac{z^k}{|z|^k},\quad 0\neq z\in\mathbb{C}.$$
For every $k\neq0,$ we have\footnote{This can be checked e.g. by substituting $m=\Omega=g_k$ into the formula (26) in Theorem II.4.3 in \cite{Stein}.}
$$(\mathcal{F}g_k)(z)=\frac{|k|}{2\pi i^k}\cdot\frac{g_k(z)}{|z|^2},\quad 0\neq z\in\mathbb{C}.$$
Hence,
$$(\mathcal{F}g)(z)=\alpha_0\delta+\frac1{|z|^2}h(e^{i{\rm Arg}(z)}),$$
where the smooth function $h$ on the circle is defined by the formula
$$h(e^{i\theta})=\sum_{0\neq k\in\mathbb{Z}}\frac{|k|}{2\pi i^k}\alpha_k e^{ik\theta}.$$
So, $(\mathcal{F}g-\alpha_0\delta)(z)=O(|z|^{-2}).$ Furthermore, have
$$\nabla(\frac{h(e^{i{\rm Arg(z)}})}{|z|^2})=h(e^{i{\rm Arg(z)}})\cdot\nabla(\frac1{|z|^2})+\frac1{|z|^2}\cdot \frac{dh(e^{i\theta})}{d\theta}|_{\theta={\rm Arg}(z)}\cdot \nabla({\rm Arg}(z))=O(\frac1{|z|^3}).$$
This completes the verification that $\mathcal{F}g-\alpha_0\delta$ satisfies condition \eqref{parcet conditions}.
\end{proof}

\begin{thm}\label{main lemma} For every $A=A^*\in\mathcal{M}$ with ${\rm spec}(A)\subset\mathbb{Z}$ and for every Lipschitz function $f,$ we have
$$\|T_{f^{[1]}}^{A,A}(V)\|_{1,\infty}\leq c_{abs}\|f'\|_{\infty}\|V\|_1,\quad V\in L_1(\mathcal{M}).$$
\end{thm}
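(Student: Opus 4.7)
The plan is to realize $T_{f^{[1]}}^{A,A}$ as (an approximation of) a noncommutative Fourier multiplier on $\mathbb{R}^2$ and apply Parcet's Theorem \ref{Thm=Parcet}. Since ${\rm spec}(A)\subset\mathbb{Z}$, formula \eqref{doi special} expands the double operator integral as $T_{f^{[1]}}^{A,A}(V) = \sum_{k,m \in \mathbb{Z}} f^{[1]}(k,m) V_{km}$ where $V_{km} := E_A(\{k\}) V E_A(\{m\})$. For each $l>0$ I lift $V$ to an element of $L_1(\mathcal{M} \otimes L_\infty(\mathbb{R}^2))$ via
$$\Phi_l(V)(s,t) := G_l(s) G_l(t) \cdot e^{isA} V e^{itA} = \sum_{k,m \in \mathbb{Z}} V_{km} \otimes \bigl(G_l^{\otimes 2} e_{(k,m)}\bigr).$$
Since $e^{isA}$ and $e^{itA}$ are unitary, $\tau(|\Phi_l(V)(s,t)|) = \tau(|V|)$ pointwise in $(s,t)$, and $\|G_l\|_1=1$ gives that $\Phi_l$ is an isometric embedding.

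The heart of the argument is the construction of a symbol $g \in L_\infty(\mathbb{R}^2)$ whose values on $\mathbb{Z}^2$ agree with $f^{[1]}$, whose Fourier multiplier $g(\nabla)$ is bounded on $L_2(\mathbb{R}^2)$ with norm $\lesssim \|f'\|_\infty$, and whose Fourier transform $\mathcal{F}g$ (after subtracting any Dirac mass at the origin) is a locally integrable kernel satisfying the Parcet conditions \eqref{parcet conditions} with constants $\lesssim \|f'\|_\infty$. The idea is to assemble $g$ from smooth degree-zero homogeneous pieces on $\mathbb{R}^2\setminus\{0\}$, to which Lemma \ref{g is ok} applies, chosen so that their aggregate reproduces the divided difference on the integer lattice (the behavior of $g$ off the lattice is unconstrained and may be tailored to suit the homogeneous framework). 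Given such $g$, Theorem \ref{Thm=Parcet} yields
$$\|(1 \otimes g(\nabla))\Phi_l(V)\|_{1,\infty} \leq c_{abs} \|f'\|_\infty \|\Phi_l(V)\|_1 = c_{abs} \|f'\|_\infty \|V\|_1.$$

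To conclude, I compare $(1 \otimes g(\nabla)) \Phi_l(V)$ to $\Phi_l(T_{f^{[1]}}^{A,A}(V))$: an appropriate application of Lemma \ref{third convergence lemma} (after pre-regularizing $g$ so that $\mathcal{F}g \in L_1(\mathbb{R}^2)$ while preserving the Parcet estimates) gives, for each lattice point, $g(\nabla)(G_l^{\otimes 2}e_{(k,m)}) - f^{[1]}(k,m) G_l^{\otimes 2} e_{(k,m)} \to 0$ in $L_1(\mathbb{R}^2)$ as $l \to \infty$. Reducing by density to $V$ with only finitely many nonzero blocks $V_{km}$ and summing termwise, one obtains $(1 \otimes g(\nabla))\Phi_l(V) - \Phi_l(T_{f^{[1]}}^{A,A}(V)) \to 0$ in $L_1(\mathcal{M} \otimes L_\infty(\mathbb{R}^2))$. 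Applying the quasi-triangle inequality for $\|\cdot\|_{1,\infty}$ together with the lower bound of Lemma \ref{gl tensor lemma} (with $d=2$) to $\Phi_l(T_{f^{[1]}}^{A,A}(V))$ then produces the claimed bound, with the dimensional factor absorbed into $c_{abs}$. The main obstacle is the symbol construction outlined above: $f^{[1]}$ is bounded but in general not $C^1$ across the diagonal, so interpolating its lattice values by a Fourier multiplier with Parcet-type kernel estimates depending only on $\|f'\|_\infty$ requires a decomposition finely adapted to the rigid smooth-homogeneous framework of Lemma \ref{g is ok}, presumably paired with an independent regularization step so that Lemma \ref{third convergence lemma} remains applicable.
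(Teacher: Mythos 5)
Your framework matches the paper's in its outer contours: embed $V$ into $L_1(\mathcal{M}\otimes L_\infty(\mathbb{R}^2))$ via a modulated Gaussian, realize the double operator integral as an approximate Fourier multiplier lifted by Parcet's theorem, pass to the limit $l\to\infty$ using Lemma~\ref{third convergence lemma}, and remove the tensor factor with Lemma~\ref{gl tensor lemma}. However, the step you flag yourself as ``the main obstacle'' --- constructing an $f$-dependent symbol $g$ equal to $f^{[1]}$ on $\mathbb{Z}^2$ with Calder\'on--Zygmund kernel bounds controlled by $\|f'\|_\infty$ --- is not a technical difficulty to be finessed; it is essentially the original conjecture reformulated, and inside the smooth-homogeneous framework of Lemma~\ref{g is ok} it is outright impossible: a degree-zero homogeneous function (or any linear combination of such) is constant along rays from the origin, while $f^{[1]}(k,m)$ genuinely varies as $(k,m)$ moves radially. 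More broadly, divided-difference symbols of Lipschitz $f$ do \emph{not} in general give Calder\'on--Zygmund operators on $L_1$ --- that is precisely what Farforovskaya's counterexample to Krein's conjecture reflects --- so no amount of off-lattice freedom rescues the naive modulation $e_{(k,m)}$.

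The missing idea is a \emph{different embedding}. The paper conjugates $V\otimes G_l^{\otimes 2}$ by the unitary $u=\sum_j p_j\otimes e_{(j,f(j))}$, i.e.\ it builds $f$ into the modulation. This sends the $(i,j)$ block of $V$ to frequency $(i-j,\,f(i)-f(j))$, so that the multiplier need only satisfy $g(\lambda,\mu)=\mu/\lambda$ at these points. After normalizing $\|f'\|_\infty\le 1$, all such points lie in the cone $\{|\mu|\le|\lambda|\}$, where $\mu/\lambda$ is bounded, smooth, and degree-zero homogeneous; a single \emph{fixed} extension $g$ with $g(e^{i\theta})=\tan\theta$ on the relevant arcs, cut off at small/large frequencies by $\phi_m$ to address the regularity you correctly anticipate, then works uniformly over all Lipschitz $f$. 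Encoding $f(A)$ into the unitary rather than into the symbol is the crux of the proof, and without it your symbol-construction step cannot be completed.
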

\begin{proof} Fix a smooth homogeneous function $g$ on $\mathbb{R}^2$ such that $g(e^{i\theta})=\tan(\theta)$ for $\theta\in(-\frac{\pi}{4},\frac{\pi}{4})$ and for $\theta\in(\frac{3\pi}{4},\frac{5\pi}{4}).$ Without loss of generality, $g$ is mean zero on the circle $\{|z|=1\}.$ By Lemma \ref{g is ok}, $\mathcal{F}g$ satisfies the conditions \eqref{parcet conditions}. The operator $g(\nabla)\in B(L_2(\mathbb{R}^2))$ since $g$ is bounded. Recall that $(g(\nabla))(x)=(\mathcal{F}g)\ast x.$  By Theorem \ref{Thm=Parcet}, we have
$$1\otimes g(\nabla):L_1(\mathcal{M}\otimes L_\infty(\mathbb{R}^2))\to L_{1,\infty}(\mathcal{M}\otimes L_\infty(\mathbb{R}^2)).$$
Consider Schwartz functions\footnote{Let $\psi$ be a Schwartz function on $\mathbb{R}$ which is $1$ on $(-1,1)$ and which is supported on $(-2,2).$ Set $\psi_m=\sigma_m\psi\cdot(1-\sigma_{\frac1m}\psi).$ It follows that
$$\mathcal{F}(\psi_m)=\mathcal{F}(\sigma_m\psi)\ast\mathcal{F}(1-\sigma_{\frac1m}\psi)=m\sigma_{\frac1m}(\mathcal{F}(\psi))-m\sigma_{\frac1m}(\mathcal{F}(\psi))\ast \frac1m\sigma_m(\mathcal{F}(\psi)).$$
Applying Young's inequality, we conclude that
$$\|\mathcal{F}(\psi_m)\|_1\leq \|m\sigma_{\frac1m}(\mathcal{F}(\psi))\|_1+\|m\sigma_{\frac1m}(\mathcal{F}(\psi))\|_1\|\frac1m\sigma_m(\mathcal{F}(\psi))\|_1=\|\mathcal{F}(\psi)\|_1+\|\mathcal{F}(\psi)\|_1^2.$$
Consider the functions $\phi_m=\psi_{3m}^{\otimes 2}.$ By Fubini Theorem, $\sup_{m\geq1}\|\mathcal{F}(\phi_m)\|_1<\infty.$ Clearly, $\psi_m=1$ on the set $[-m,m]\backslash[-\frac2m,\frac2m].$ Thus, $\phi_m(t)=1$ if $t\in 3mK$ and $3mt\notin 2K,$ where $K=[-1,1]\times[-1,1].$ Thus, $\phi_m(t)=1$ whenever $|t|\in(\frac1m,m).$} $\phi_m$ on $\mathbb{R}^2$ which vanish near $0,$ such that $\phi_m(t)=1$ for $|t|\in(\frac1m,m)$ and such that $\|\mathcal{F}\phi_m\|_1\leq c_{abs}$ for all $m\geq1.$ It follows that
$$\|1\otimes (g\phi_m)(\nabla)\|_{L_1\to L_{1,\infty}}\leq\|1\otimes g(\nabla)\|_{L_1\to L_{1,\infty}}\|1\otimes\phi_m(\nabla)\|_{L_1\to L_1}\leq$$
$$\leq \|1\otimes g(\nabla)\|_{L_1\to L_{1,\infty}}\|\mathcal{F}\phi_m\|_1\leq c_{abs}\|1\otimes g(\nabla)\|_{L_1\to L_{1,\infty}}=c_{abs},\quad m\geq1.$$
The last equality holds because $g$ is fixed.
 
By assumption, $A=\sum_{j\in\mathbb{Z}}jp_j,$ where $\{p_j\}_{j\in\mathbb{Z}}$ are pairwise orthogonal projections such that $\sum_{j\in\mathbb{Z}}p_j=1.$ Since $A$ is bounded, it follows that $p_j=0$ for all but finitely many $j\in\mathbb{Z}.$ Hence, sums are, in fact, finite. Consider a unitary operator
$$u=\sum_{j\in\mathbb{Z}}p_j\otimes e_{(j,f(j))},$$
where $e_{(j,f(j))}$ is given in \eqref{ek def}.
Without loss of generality, $\|f'\|_{\infty}\leq 1.$ For every $m\geq\|A\|_{\infty},$ we have $|i-j|,|f(i)-f(j)|\leq 2m$ for every $i,j\in{\rm spec}(A).$ Hence,
$$(g\phi_m)(i-j,f(i)-f(j))=g(i-j,f(i)-f(j))=\frac{f(i)-f(j)}{i-j},\quad i,j\in{\rm spec}(A),\quad i\neq j.$$
It follows from the preceding paragraph and from the equality $\|G_l^{\otimes 2}\|_1=1$ that
\begin{equation}\label{form0}
\|(1\otimes(g\phi_m)(\nabla))(u(V\otimes G_l^{\otimes 2})u^*)\|_{1,\infty}\leq c_{abs}\|u(V\otimes G_l^{\otimes 2})u^*\|_1=c_{abs}\|V\|_1.
\end{equation}
 
It is clear that
$$(1\otimes(g\phi_m)(\nabla))(u(V\otimes G_l^{\otimes 2})u^*)=\sum_{i,j}p_iVp_j\otimes (g\phi_m(\nabla))(G_l^{\otimes 2}e_{(i-j,f(i)-f(j))}).$$
Since there are only finitely many summands, it follows from Lemma \ref{third convergence lemma} (as applied to the Schwartz function $g\phi_m$) that
$$(1\otimes(g\phi_m)(\nabla))(u(V\otimes G_l^{\otimes 2})u^*)-\sum_{i\neq j}p_iVp_j\otimes\frac{f(i)-f(j)}{i-j}G_l^{\otimes 2}e_{(i-j,f(i)-f(j))}\to0$$
in $L_1(\mathcal{M}\otimes L_{\infty}(\mathbb{R}^2))$ as $l\to\infty.$ It is immediate that
$$\sum_{i\neq j}p_iVp_j\otimes\frac{f(i)-f(j)}{i-j}G_l^{\otimes 2}e_{(i-j,f(i)-f(j))}=$$
$$=\Big(\sum_{k\in\mathbb{Z}}p_k\otimes e_{(k,f(k))}\Big)\cdot\Big(\sum_{i\neq j}p_iVp_j\otimes\frac{f(i)-f(j)}{i-j}G_l^{\otimes 2}\Big)\cdot \Big(\sum_{l\in\mathbb{Z}}p_l\otimes e_{(-l,-f(l))}\Big)=$$
$$\stackrel{\eqref{doi special}}{=}u(T_{f^{[1]}}^{A,A}(V)\otimes G_l^{\otimes 2})u^*.$$
Therefore,
\begin{equation}\label{form1}
(1\otimes(g\phi_m)(\nabla))(u(V\otimes G_l^{\otimes 2})u^*)-u(T_{f^{[1]}}^{A,A}(V)\otimes G_l^{\otimes 2})u^*\to0
\end{equation}
in $L_1(\mathcal{M}\otimes L_{\infty}(\mathbb{R}^2))$ (and, hence, in $L_{1,\infty}(\mathcal{M}\otimes L_{\infty}(\mathbb{R}^2))$) as $l\to\infty.$

Combining \eqref{form0} and \eqref{form1}, we arrive at
$$\limsup_{l\to\infty}\|u(T_{f^{[1]}}^{A,A}(V)\otimes G_l^{\otimes 2})u^*\|_{1,\infty}\leq c_{abs}\|V\|_1.$$
Since $u$ is unitary, it follows that
$$\limsup_{l\to\infty}\|T_{f^{[1]}}^{A,A}(V)\otimes G_l^{\otimes 2}\|_{1,\infty}\leq c_{abs}\|V\|_1.$$
The assertion follows now from Lemma \ref{gl tensor lemma}.
\end{proof}

\section{Proof of the main results}

In this section we collect the results announced in the abstract and its corollaries. Throughout this section fix a semi-finite von Neumann algebra $\mathcal{M}$ with normal, semi-finite, faithful trace $\tau.$

\begin{lem}\label{riemann} Let $A=A^*\in\mathcal{M}.$ If $\{\xi_n\}_{n\geq0}$ is a uniformly bounded sequence of Borel functions on $\mathbb{R}^2$ such that $\xi_n\to\xi$ everywhere, then
\begin{equation}\label{riemann sum convergence}
T_{\xi_n}^{A,A}(V)\to T_{\xi}^{A,A}(V),\quad V\in L_2(\mathcal{M})
\end{equation}
in $L_2(\mathcal{M})$ as $n\to\infty.$
\end{lem}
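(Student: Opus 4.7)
The plan is to reduce the lemma to a routine application of the Lebesgue dominated convergence theorem, using the fact that $T_\xi^{A,A}$ is, by construction in Section \ref{doi subsection}, the spectral integral of the bounded Borel function $\xi$ with respect to the projection-valued measure $\nu$ of \eqref{birsol spectral measure} acting on the Hilbert space $L_2(\mathcal{M})$. The whole argument is an application of the bounded Borel functional calculus attached to $\nu$.

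Concretely, I would fix $V\in L_2(\mathcal{M})$ and consider the scalar-valued set function
$$\mu_V(\mathcal{B}):=\langle \nu(\mathcal{B})V,V\rangle_{L_2(\mathcal{M})},\qquad \mathcal{B}\subset\mathbb{R}^2\text{ Borel}.$$
Since $\nu$ is a countably additive projection-valued measure, $\mu_V$ is a finite positive Borel measure on $\mathbb{R}^2$; taking $\mathcal{B}=\mathbb{R}\times\mathbb{R}$ in \eqref{birsol spectral measure} and using $E_A(\mathbb{R})=1$ gives $\nu(\mathbb{R}^2)V=V$, so $\mu_V(\mathbb{R}^2)=\|V\|_2^2<\infty$. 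The standard identity for operators defined by a PVM then yields, for every bounded Borel function $\eta$ on $\mathbb{R}^2$,
$$\|T_\eta^{A,A}(V)\|_2^2=\int_{\mathbb{R}^2}|\eta(\lambda,\mu)|^2\,d\mu_V(\lambda,\mu).$$

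Applying this identity to $\eta=\xi_n-\xi$, which is a bounded Borel function because the $\xi_n$ are uniformly bounded by some $M$ and $\xi$ is their pointwise limit (hence Borel with $\|\xi\|_\infty\leq M$), I obtain
$$\|T_{\xi_n}^{A,A}(V)-T_\xi^{A,A}(V)\|_2^2=\int_{\mathbb{R}^2}|\xi_n-\xi|^2\,d\mu_V.$$
The integrand is bounded pointwise by $4M^2$, which is integrable with respect to the finite measure $\mu_V$, and converges to $0$ everywhere by hypothesis. Lebesgue's dominated convergence theorem therefore gives \eqref{riemann sum convergence}. I do not foresee any serious obstacle: the work has essentially been done once the double operator integral is identified with a PVM spectral integral in Section \ref{doi subsection}, and the remainder is a textbook dominated convergence argument.
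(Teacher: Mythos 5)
Your proof is correct and follows essentially the same idea as the paper: interpret $T_\xi^{A,A}$ as the spectral integral of $\xi$ against the projection-valued measure $\nu$ on $\mathbb{R}^2$ and deduce the $L_2$-convergence from pointwise convergence of the symbols via dominated convergence with respect to the scalar spectral measure $\mu_V=\langle\nu(\cdot)V,V\rangle$. The paper packages this by composing $\nu$ with a Borel isomorphism $\gamma:\mathbb{R}\to\mathbb{R}^2$ so as to invoke the standard strong-operator convergence of the functional calculus of a single self-adjoint operator $B$ with $E_B=\nu\circ\gamma$, but that standard fact is itself proved by the dominated-convergence argument you wrote out directly.
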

\begin{proof} Let $\nu$ be a projection valued measure on $\mathbb{R}^2$ considered in Subsection \ref{doi subsection} (see \eqref{birsol spectral measure}). Let $\gamma:\mathbb{R}\to\mathbb{R}^2$ be a Borel measurable bijection. Clearly, $\nu\circ\gamma$ is a projection valued measure on $\mathbb{R}.$ Hence, there exists a self-adjoint operator $B$ acting on the Hilbert space $L_2(\mathcal{M})$ such that $E_B=\nu\circ\gamma.$

Set $\eta_n=\xi_n\circ\gamma$ and $\eta=\xi\circ\gamma.$ We have $\eta_n\to\eta$ everywhere on $\mathbb{R}.$ Thus, 
$$T_{\xi_n}^{A,A}=\int_{\mathbb{R}^2}\xi_nd\nu=\int_{\mathbb{R}}\eta_n(\lambda)dE_B(\lambda)=\eta_n(B)\to\eta(B)=$$
$$=\int_{\mathbb{R}}\eta(\lambda)dE_B(\lambda)=\int_{\mathbb{R}^2}\xi d\nu=T_{\xi}^{A,A}.$$
Here, the convergence is understood with respect to the strong operator topology on the space $B(L_2(\mathcal{M})).$ In particular, \eqref{riemann sum convergence} follows.
\end{proof}

\begin{proof}[Proof of Theorem \ref{doi bound}] {\bf Step 1.} Let $A$ is bounded. For every $n\geq1,$ set
$$A_n\stackrel{def}{=}\sum_{k\in\mathbb{Z}}\frac{k}{n}E_A([\frac{k}{n},\frac{k+1}{n})),$$
$$\xi_n(t,s)=f^{[1]}(\frac{k}{n},\frac{l}{n}),\quad t\in[\frac{k}{n},\frac{k+1}{n}),s\in [\frac{l}{n},\frac{l+1}{n}).$$
It is immediate that (see e.g. Lemma 8  in \cite{PScrelle} for much stronger assertion)
$$T_{\xi_n}^{A,A}(V)=T_{f^{[1]}}^{A_n,A_n}(V)=T_{(n\sigma_nf)^{[1]}}^{nA_n,nA_n}(V).$$
It follows from Theorem \ref{main lemma} that
$$\|T_{\xi_n}^{A,A}(V)\|_{1,\infty}\leq c_{abs}\|(n\sigma_nf)'\|_{\infty}\|V\|_1=c_{abs}\|f'\|_{\infty}\|V\|_1.$$
Note that $\xi_n\to f^{[1]}$ everywhere. It follows from Lemma \ref{riemann} that
$$T_{\xi_n}^{A,A}(V)\to T_{f^{[1]}}(V),\quad V\in L_2(\mathcal{M})$$
in $L_2(\mathcal{M})$ (and, hence, in measure --- see e.g \cite{PSW}) as $n\to\infty.$ Since the quasi-norm in $L_{1,\infty}(\mathcal{M})$ is a Fatou quasi-norm, it follows that
$$\|T_{f^{[1]}}^{A,A}(V)\|_{1,\infty}\leq c_{abs}\|f'\|_{\infty}\|V\|_1,\quad V\in (L_1\cap L_2)(\mathcal{M}).$$

{\bf Step 2.} Let now $A$ be an arbitrary operator affiliated with $\mathcal{M}.$ Set $A_n=AE_A([-n,n]).$ By Step 1, we have
$$\|T_{f^{[1]}}^{A_n,A_n}(V)\|_{1,\infty}\leq c_{abs}\|f'\|_{\infty}\|V\|_1.$$
It follows immediately from the definition of the double operator integral that
$$T_{f^{[1]}}^{A_n,A_n}(V)=E_A([-n,n])\cdot T_{f^{[1]}}^{A,A}(V)\cdot E_A([-n,n])\to T_{f^{[1]}}^{A,A}(V)$$
in $L_2(\mathcal{M})$ (and, hence, in measure) as $n\to\infty.$ Since the quasi-norm in $L_{1,\infty}(\mathcal{M})$ is a Fatou quasi-norm, the assertion follows.
\end{proof}

The following lemma is ideologically similar to Theorem 7.4 in \cite{PSW}.

\begin{lem}\label{eg psw lemma} If $A,B\in\mathcal{M}$ are such that $[A,B]\in L_2(\mathcal{M}),$ then, for every Lipschitz function $f,$ we have
$$T_{f^{[1]}}^{A,A}([A,B])=[f(A),B].$$
\end{lem}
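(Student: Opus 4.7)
The plan is the standard double operator integral strategy: verify the identity by direct computation in the finite-spectrum case, then approximate a general bounded self-adjoint $A$ by such operators and use the DOI convergence machinery (Lemma \ref{riemann}) together with continuity of the Lipschitz functional calculus to pass to the limit.

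\textbf{Step 1 (finite spectrum).} If $A = \sum_{j=1}^{N}\lambda_j p_j$ with finitely many nonzero spectral projections, then \eqref{doi special} applies and
\[
T_{f^{[1]}}^{A,A}([A,B]) \;=\; \sum_{j,k} f^{[1]}(\lambda_j,\lambda_k)\, p_j[A,B]p_k.
\]
Since $p_jA=\lambda_j p_j$ and $Ap_k=\lambda_k p_k$, we have $p_j[A,B]p_k=(\lambda_j-\lambda_k)p_jBp_k$. The $j\neq k$ terms then collapse via the definition of $f^{[1]}$, while the $j=k$ terms vanish on both sides, giving
\[
T_{f^{[1]}}^{A,A}([A,B]) \;=\; \sum_{j,k}(f(\lambda_j)-f(\lambda_k))\,p_jBp_k \;=\; f(A)B - Bf(A) \;=\; [f(A),B]
\]
as an identity in $\mathcal{M}$.

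\textbf{Step 2 (general bounded $A$).} Approximate $A$ by $A_n = \sum_k \frac{k}{n}E_A([k/n,(k+1)/n))$, of finite spectrum. Step 1 gives $T_{f^{[1]}}^{A_n,A_n}([A_n,B]) = [f(A_n),B]$. Rewriting the left-hand side as $T_{\xi_n}^{A,A}$ for the step-function symbol $\xi_n$ from Step 1 of the proof of Theorem \ref{doi bound} (which satisfies $\xi_n \to f^{[1]}$ pointwise and $\|\xi_n\|_\infty \leq \|f'\|_\infty$), Lemma \ref{riemann} gives $L_2$-convergence of the DOI side. On the other side, Lipschitz continuity of $f$ and $\|A_n - A\|_\infty \leq 1/n$ yield $\|f(A_n) - f(A)\|_\infty \to 0$, hence $[f(A_n),B] \to [f(A),B]$ in operator norm. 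Combining both convergences yields the desired identity.

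\textbf{Main obstacle.} The subtle point is reconciling the two convergence modes in Step 2, since the intermediate commutators $[A_n,B]$ need not lie in $L_2(\mathcal{M})$ for arbitrary semifinite $\mathcal{M}$, whereas the DOI $T_{f^{[1]}}^{A,A}$ is defined on $L_2$. The cleanest way around this is to pair both sides of the finite-spectrum identity against elements $Y \in L_2(\mathcal{M})\cap\mathcal{M}$ (a dense subspace of $L_2$) and exploit trace-duality: by the definition of the DOI via the spectral measure $\nu$ in \eqref{birsol spectral measure}, one has $\tau(T_{f^{[1]}}^{A,A}(V)\,Y^*)$ depending only on $V \in L_2$ and the bounded Borel symbol $f^{[1]}$, and the required limit then follows from the boundedness of $T_{f^{[1]}}^{A,A}$ on $L_2$ combined with the convergence $[A_n,B] \to [A,B]$ in the weak operator topology pulled through $\tau(\,\cdot\, Y^*)$.
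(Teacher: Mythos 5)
You correctly identify the central obstruction in Step~2 --- for general semifinite $\mathcal{M}$ and $B\in\mathcal{M}$, the operators $[A_n,B]$ need not lie in $L_2(\mathcal{M})$ --- but your proposed workaround does not actually close the gap. The expression $T_{f^{[1]}}^{A,A}([A_n,B])$ (and hence $\tau(T_{f^{[1]}}^{A,A}([A_n,B])\,Y^*)$) is simply undefined once $[A_n,B]\notin L_2(\mathcal{M})$, since the double operator integral is defined via the spectral measure $\nu$ acting on the Hilbert space $L_2(\mathcal{M})$. The trace-duality swap $\tau\bigl(T_{\xi}^{A,A}(V)\,Y^*\bigr)=\tau\bigl(V\,(T_{\bar\xi}^{A,A}(Y))^*\bigr)$ requires both entries to be in $L_2$; for $V$ merely bounded and $Y\in L_2\cap\mathcal{M}$, this pairing is not available. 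Strengthening to $Y\in L_1\cap\mathcal{M}$ would require knowing $T_{\bar\xi}^{A,A}(Y)\in L_1$, and no such bound holds for Lipschitz symbols (this is precisely the failure of Krein's conjecture, and proving a weak-type substitute is the whole point of the paper). So the ``pull through $\tau(\cdot Y^*)$'' step is circular or vacuous as stated.

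The paper avoids discretizing $A$ entirely. Instead it uses the multiplicativity $T_{\xi_1\xi_2}^{A,A}=T_{\xi_1}^{A,A}T_{\xi_2}^{A,A}$ with $\xi_1=f^{[1]}$ and $\xi_2(\lambda,\mu)=\lambda-\mu$, applied to $pB\in L_2(\mathcal{M})$ for $\tau$-finite projections $p$, yielding $T_{f^{[1]}}^{A,A}(ApB-pBA)=f(A)pB-pBf(A)$. It then invokes Proposition~6.6 of \cite{PSW} to produce $\tau$-finite projections $q_n\uparrow 1$ with $\|[A,q_n]\|_2\to0$. Since $Aq_nB-q_nBA=[A,q_n]B+q_n[A,B]\to[A,B]$ in $L_2(\mathcal{M})$, the $L_2$-boundedness of $T_{f^{[1]}}^{A,A}$ gives convergence of the left side to $T_{f^{[1]}}^{A,A}([A,B])$, while the right side $f(A)q_nB-q_nBf(A)\to[f(A),B]$ in the strong operator topology. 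The identity follows. The key idea you are missing is to approximate by left multiplication with slowly commuting finite projections rather than by operators with discretized spectrum; this keeps all intermediate objects inside $L_2(\mathcal{M})$.
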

\begin{proof} By definition of double operator integral given in Subsection \ref{doi subsection}, we have
\begin{equation}\label{doi mult}
T_{\xi_1}^{A,A}T_{\xi_2}^{A,A}=T_{\xi_1\xi_2}^{A,A}.
\end{equation}
Let $\xi_1=f^{[1]}$ and let $\xi_2(\lambda,\mu)=\lambda-\mu$ when $|\lambda|,|\mu|\leq\|A\|_{\infty}.$ $\xi_2(\lambda,\mu)=0$ when $|\lambda|>\|A\|_{\infty}$ or $|\mu|\leq\|A\|_{\infty}.$

If $p$ is a $\tau-$finite projection, then $pB\in L_2(\mathcal{M})$ and
$$T_{\xi_1\xi_2}^{A,A}(pB)=f(A)pB-pBf(A),\quad T_{\xi_2}^{A,A}(pB)=ApB-pBA,$$
Applying \eqref{doi mult} to the operator $pB\in L_2(\mathcal{M}),$ we obtain
\begin{equation}\label{finite p}
T_{f^{[1]}}^{A,A}(ApB-pBA)=f(A)pB-pBf(A).
\end{equation}

Applying Proposition 6.6 in \cite{PSW} to the operator $nA,$ we construct a sequence $\{p_{n,k}\}_{k\geq0}$ of $\tau-$finite projections such that $p_{n,k}\uparrow 1$ as $k\to\infty$ and such that $\|[nA,p_{n,k}]\|_2\leq 1.$ Let $\{\eta_m\}_{m\geq0}$ be an orthonormal basis in $L_2(\mathcal{M}).$ Fix $k_n$ so large that
\begin{equation}\label{golova}
\|(1-p_{n,k_n})\eta_m\|_2\leq\frac1n,\quad 0\leq m<n.
\end{equation}
Set $q_n=p_{n,k_n}.$ It follows from \eqref{golova} that $q_n\to1$ in the strong operator topology (in the left regular representation of $\mathcal{M}$). Clearly, $[A,q_n]\to0$ in $L_2(\mathcal{M}).$

By construction,
$$Aq_nB-q_nBA=[A,q_n]B+q_n[A,B]\to[A,B],\quad n\to\infty,$$
in $L_2(\mathcal{M}).$ Since $T_{f^{[1]}}^{A,A}$ is bounded, it follows that
$$f(A)q_nB-q_nBf(A)\stackrel{\eqref{finite p}}{=}T_{f^{[1]}}^{A,A}(Aq_nB-q_nBA)\to T_{f^{[1]}}^{A,A}(AB-BA),\quad n\to\infty,$$
in $L_2(\mathcal{M}).$ On the other hand,
$$f(A)q_nB-q_nBf(A)\to [f(A),B],\quad n\to\infty,$$
in the strong operator topology (in the left regular representation of $\mathcal{M}$). This concludes the proof.
\end{proof}

\begin{thm}\label{final theorem} For all self-adjoint operators $A,B\in\mathcal{M}$ such that $[A,B]\in L_1(\mathcal{M})$ and for every Lipschitz function $f,$ we have
$$\|[f(A),B]\|_{1,\infty}\leq c_{abs}\|f'\|_{\infty}\cdot\|[A,B]\|_1.$$
For all self-adjoint operators $X,Y\in\mathcal{M}$ such that $X-Y\in L_1(\mathcal{M})$ and for every Lipschitz function $f,$ we have
$$\|f(X)-f(Y)\|_{1,\infty}\leq c_{abs}\|f'\|_{\infty}\|X-Y\|_1.$$
\end{thm}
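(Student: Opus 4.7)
The plan is as follows. The first (commutator) inequality is essentially immediate from the machinery already assembled. Since $A, B \in \mathcal{M}$ are bounded, $[A,B]$ lies in $\mathcal{M}$, and combined with $[A,B] \in L_1(\mathcal{M})$ the elementary inequality $\|x\|_2^2 \le \|x\|_\infty \|x\|_1$ gives $[A,B] \in (L_1 \cap L_2)(\mathcal{M})$. Lemma \ref{eg psw lemma} therefore applies and yields the identity $T_{f^{[1]}}^{A,A}([A,B]) = [f(A), B]$. Theorem \ref{doi bound} then bounds the left-hand side in $L_{1,\infty}$-norm by $c_{abs}\|f'\|_\infty\|[A,B]\|_1$, which is exactly the commutator estimate.

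For the Lipschitz inequality I would reduce to the commutator estimate via a standard $2 \times 2$ amplification. Work in $\tilde{\mathcal{M}} = \mathcal{M} \otimes M_2(\mathbb{C})$ with trace $\tilde\tau = \tau \otimes \operatorname{tr}$, and set $\tilde A = X \oplus Y$ and $\tilde B = \left(\begin{smallmatrix} 0 & 1 \\ 1 & 0 \end{smallmatrix}\right)$. Both are self-adjoint in $\tilde{\mathcal{M}}$. A direct calculation gives $[\tilde A, \tilde B]$ off-diagonal with $X-Y$ and $-(X-Y)$ on the anti-diagonal, and since $f(\tilde A) = f(X) \oplus f(Y)$ by the Borel functional calculus, $[f(\tilde A), \tilde B]$ has the same shape with $X-Y$ replaced by $f(X)-f(Y)$. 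Both absolute values are block diagonal with equal diagonal entries $|X-Y|$ and $|f(X)-f(Y)|$ respectively, so $\|[\tilde A, \tilde B]\|_1 = 2\|X-Y\|_1$. Using the identity $\mu(t, Z \oplus Z; \tilde\tau) = \mu(t/2, Z; \tau)$ (which follows from doubling of the distribution function) one obtains $\|[f(\tilde A), \tilde B]\|_{1,\infty} = 2\|f(X)-f(Y)\|_{1,\infty}$. Applying the commutator estimate already proven in $\tilde{\mathcal{M}}$ and cancelling the shared factor of $2$ yields the Lipschitz bound without any loss of constant.

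There is essentially no deep obstacle at this stage; the real work has already been done in Theorem \ref{doi bound} and Lemma \ref{eg psw lemma}. The only two points that require minor care are: selecting a \emph{self-adjoint} amplification $\tilde B$ (the naive off-diagonal choice $\left(\begin{smallmatrix} 0 & 1 \\ 0 & 0 \end{smallmatrix}\right)$ would localise the commutator to a single block but violates the self-adjointness hypothesis of the commutator estimate), and keeping track of the symmetric factor of $2$ arising in both the $L_1$- and the $L_{1,\infty}$-norm computations so that it cancels cleanly.
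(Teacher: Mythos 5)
Your proof is correct and follows exactly the same route as the paper: deduce the commutator estimate from Lemma \ref{eg psw lemma} together with Theorem \ref{doi bound} (after observing $[A,B]\in L_1\cap L_2$), then obtain the Lipschitz estimate by applying the commutator estimate to the $2\times 2$ amplification $\tilde A = X\oplus Y$, $\tilde B = \left(\begin{smallmatrix}0&1\\1&0\end{smallmatrix}\right)$. The only difference is that you make explicit the bookkeeping of the factor of $2$ on both sides and the interpolation inequality $\|x\|_2^2\le\|x\|_\infty\|x\|_1$, which the paper leaves implicit.
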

\begin{proof} By assumption, $[A,B]\in (L_1\cap L_2)(\mathcal{M}).$ The first assertion follows by combining Lemma \ref{eg psw lemma} and Theorem \ref{doi bound}. Applying the first assertion to the operators
$$
A=
\begin{pmatrix}
X&0\\
0&Y
\end{pmatrix},\quad
B=
\begin{pmatrix}
0&1\\
1&0
\end{pmatrix},
$$
we obtain the second assertion.
\end{proof}

\end{document}